\newtheorem{theorem}{Theorem}[section]
\newtheorem{claim}{Claim}[section]
\newtheorem{corollary} {Corollary}[section]
\newtheorem{definition} {Definition}[section]
\newtheorem{lemma} {Lemma}[section]
\newtheorem{proposition} {Proposition}[section]
\newtheorem{remark} {Remark}[section]
\begin{document}

\centerline{\Large A Note on Approximately Divisible C$^*$-algebras}

\vspace{1cm}

\centerline{Weihua Li \hspace{2cm} and \hspace{2cm} Junhao Shen}
\vspace{0.3cm}

 \centerline{Mathematics Department, University of New
Hampshire, Durham, NH 03824}

\vspace{0.3cm}

\centerline{Email: whli@cisunix.unh.edu \hspace{1cm} and
\hspace{1cm} jog2@cisunix.unh.edu}

\vspace{0.3cm} \noindent\textbf {Abstract: } Let $\mathcal  A$ be a
separable, unital, approximately divisible C$^*$-algebra. We show that
$\mathcal  A$ is   generated by two self-adjoint elements and the topological free entropy
dimension of any finite generating set of $\mathcal  A$ is less than or equal to 1. In
addition, we show that the similarity degree of $\mathcal
A$ is at most 5. Thus an  approximately divisible
C$^*$-algebra has an
affirmative answer to Kadison's similarity problem.\\

\vspace{0.2cm}

\noindent\textbf {Keywords:} Approximately divisible C$^*$-algebra,
generators, topological free entropy dimension, similarity degree\\

\noindent\textbf {2000 Mathematics Subject Classification:} Primary 46L05

\vspace{0.2cm}

\section{Introduction}
The class of approximately divisible C$^*$-algebras was introduced
by B. Blackadar, A. Kumjian and M. R$\o$rdam in \cite{b}, where they
constructed a large class of simple C$^*$-algebras having trivial
non-stable K-theory. They showed that the class of approximately
divisible C$^*$-algebras contains all simple unital AF-algebras and
most of the simple unital AH-algebras with real rank 0, and every
nonrational noncommutative torus.

The theory of free entropy and free entropy dimension was developed
by D. Voiculescu in the 1990's. It has been a very powerful tool in
the recent study of finite von Neumann algebras. In \cite{Vo3},  D. Voiculescu  introduced the notion of topological
free entropy dimension of elements in a unital C$^*$-algebra
as an analogue of
free entropy dimension in the context of C$^*$-algebra.  Recently, D. Hadwin and J. Shen
\cite{Don-shen 2} obtained some interesting results on topological free
entropy dimensions of unital C$^*$-algebras, which includes the irrational rotation C$^*$-algebras,
 UHF algebras and minimal tensor products of reduced free group C$^*$-algebras.
 Thus it will be interesting to consider the topological free entropy
dimensions  for larger class  of unital C$^*$-algebras. One motivation  of the paper is to calculate the topological free entropy
dimensions in the  approximately divisible unital C$^*$-algebras.

Note that Voiculescu's topological free entropy dimension is
defined for the finitely generated C$^*$-algebras. Therefore it is
natural to consider the generator problem for approximately
divisible unital C$^*$-algebras before we carry out the calculation
of the topological free entropy for approximately divisible unital
C$^*$-algebras. In fact the generator problem for C$^*$-algebras and
the one for von Neumann algebras
 have been studied by many people and many
 results have been obtained. For example, C. Olsen and W. Zame \cite{olsen} showed that if $\mathcal  A$
 is a unital separable C$^*$-algebra and $\mathcal  B$ is a UHF algebra,
 then ${\mathcal  A}\otimes {\mathcal  B}$ is generated by two self-adjoint elements in  ${\mathcal  A}\otimes {\mathcal  B}$.
 It is clear that such ${\mathcal  A}\otimes {\mathcal  B}$ is approximately divisible. In the paper we obtain the following result
   (see Theorem 3.1), which is an extension of C. Olsen and W. Zame's result in \cite{olsen}.

 \vspace{0.2cm}

 \noindent {\textbf{Theorem: }If $\mathcal  A$ is a unital separable approximately divisible
C$^*$-algebra, then $\mathcal  A$ is  generated by two self-adjoint
elements in  ${\mathcal  A}$, i.e., $\mathcal  A$ is singly
generated.

 \vspace{0.2cm}

Next, we develop the techniques from \cite{Don-shen 2} and compute the topological free entropy dimension
of any finite family of self-adjoint generators of a unital separable approximately divisible
C$^*$-algebra. More specifically, we obtain the following result (see Theorem 4.3).

 \vspace{0.2cm}

 \noindent {\textbf{Theorem: }\label{theorem, lower bound}
Let $\mathcal{A}$ be a unital separable approximately divisible
C$^{*}$-algebra. If $\mathcal  A$ has approximation property, then $$\delta
_{top}(x_1,\ldots, x_n)= 1,$$ where $x_1,\ldots,x_n$ is any family of self-adjoint generators of $\mathcal A$.
\vspace{0.2cm}

In the last part of the paper, we study the Kadison's similarity problem for approximately divisible
C$^*$-algebras. In \cite{Kadison1},
  R. Kadison formulated his famous similarity problem for a C$^*$-algebra $\mathcal A$, which asks the following question:
Let $\pi: {\mathcal  A}\rightarrow {\mathcal  B(\mathcal H)}$
($\mathcal H$ is a Hilbert space) be a unital bounded homomorphism.
Is $\pi$ similar to a
*-homomorphism, that is, there exists an invertible operator
$S\in{\mathcal  B(\mathcal H)}$ such that $S^{-1}\pi(\cdot)S$ is a
*-homomorphism?

 G. Pisier \cite{Pisier 1} introduced a powerful concept,
similarity degree of a C$^*$-algebra,   to determine
whether Kadison's similarity problem for a
C$^*$-algebra  has an affirmative answer. In fact, he showed that a C$^*$-algebra $\mathcal  A$ has an
affirmative answer to Kadison's similarity problem if and only if
the similarity degree of $\mathcal  A$, $d({\mathcal  A})$, is finite.
The similarity degrees of some classes of C$^*$-algebras have been known, which we list as below.
\begin{enumerate}
\item  $\mathcal  A$ is nuclear if and only if $d({\mathcal  A})=2$
( \cite{Bunce1}, \cite{Christensen1}, \cite{Pisier2});
\item if ${\mathcal  A}={\mathcal  B(\mathcal H)}$, then $d({\mathcal  A})=3$
(\cite{Pisier3});
\item $d({\mathcal  A}\otimes {\mathcal  K(\mathcal H)})\leq 3$ for any C$^*$-algebra
$\mathcal  A$ (\cite{Haagerup 1}, \cite{Pisier4});
\item if $\mathcal  M$ is a factor of type II$_1$ with property $\Gamma$,
then $d({\mathcal  M})=3$ (\cite{Christensen2}).
\end{enumerate}
The last result (see Theorem 5.1) we obtain in the paper is the calculation of similarity degree of approximately divisible
C$^*$-algebras.

\vspace{0.2cm} \noindent {\textbf{Theorem: }}  If $\mathcal  A$ is a
unital separable approximately divisible C$^*$-algebra, then
$$d({\mathcal  A})\leq 5.$$ As   a
corollary, an  approximately divisible C$^*$-algebra has an
affirmative answer to Kadison's similarity problem.

The paper has five sections. In section 2, we recall the definition of  approximately divisible
C$^*$-algebra. The generator problem for an  approximately divisible
C$^*$-algebra is considered in section 3. The computation of topological free entropy dimension in an  approximately divisible
C$^*$-algebra is carried out in section 4. In section 5, we consider the similarity degree of an  approximately divisible
C$^*$-algebra.

\section{Notation and preliminaries}

In this section, we will introduce some notation that will be needed
later and recall the definition of approximately divisible
C$^*$-algebra introduced by B. Blackadar, A. Kumjian and M.
R$\o$rdam \cite{b}.

Let ${\mathcal  M}_k({\Bbb C})$ be the $k\times k$ full matrix
algebra with entries in $\Bbb C$, and ${\mathcal  M}_k^{s.a}({\Bbb
C})$ be the subalgebra of ${\mathcal  M}_k({\Bbb C})$ consisting of
all self-adjoint matrices of ${\mathcal  M}_k({\Bbb C})$. Let
${\mathcal  U}_k$ be the group of all unitary matrices in ${\mathcal
M}_k({\Bbb C})$.  Let ${\mathcal  M}_k({\Bbb C})^n$ denote the
direct sum of $n$ copies of ${\mathcal  M}_k({\Bbb C})$. Let
$({\mathcal  M}_k^{s.a}({\Bbb C}))^n$ be the direct sum of $n$
copies of ${\mathcal  M}_k^{s.a}({\Bbb C})$.

The following lemma is a well-known fact.
\begin{lemma}
Suppose $\mathcal  B$ is a finite-dimensional C$^*$-algebra. Then there
exist   positive integers $r$ and  $k_1, \ldots, k_r$ such that
 $${\mathcal  B}\cong {\mathcal  M}_{k_1}({\Bbb C})\oplus \cdots \oplus {\mathcal  M}_{k_r}({\Bbb
 C}).$$
\end{lemma}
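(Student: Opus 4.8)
The plan is to reduce to the case of a C$^*$-algebra with trivial center and then produce an explicit system of matrix units. First I would observe that, being finite-dimensional, $\mathcal B$ admits a faithful unital $*$-representation on a finite-dimensional Hilbert space (for instance via its universal representation, or a direct sum of GNS representations of a separating family of states), so that we may regard $\mathcal B$ as a $*$-closed subalgebra of some $\mathcal M_N(\Bbb C)$. In particular $\mathcal B$ is a von Neumann algebra and its projections have well-defined ranks in $\mathcal M_N(\Bbb C)$.

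Next I would pass to the center $Z = Z(\mathcal B)$, a finite-dimensional abelian C$^*$-algebra. Its Gelfand spectrum is a finite set, so $Z \cong \Bbb C^{\,r}$ and the identity decomposes as $1 = z_1 + \cdots + z_r$ into a sum of the minimal (hence mutually orthogonal) central projections. Writing $\mathcal B_i = z_i \mathcal B$, each $\mathcal B_i$ is a finite-dimensional C$^*$-algebra with unit $z_i$ and trivial center, and $\mathcal B \cong \mathcal B_1 \oplus \cdots \oplus \mathcal B_r$. It therefore suffices to show that a finite-dimensional C$^*$-algebra with trivial center is $*$-isomorphic to a single full matrix algebra $\mathcal M_k(\Bbb C)$.

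For this factor case I would argue by comparison of projections. Choose a minimal projection $e \in \mathcal B_i$, obtained by taking a nonzero projection of least rank among the projections of $\mathcal B_i$. The corner $e\mathcal B_i e$ is then a finite-dimensional C$^*$-algebra in which $e$ is the only nonzero projection, which forces $e\mathcal B_i e = \Bbb C e$, i.e. $e$ is abelian. Using that the center is trivial, any two minimal projections are Murray--von Neumann equivalent; hence I can write $z_i = e_{11} + \cdots + e_{kk}$ as a sum of mutually orthogonal minimal projections together with partial isometries $e_{j1}$ implementing the equivalences $e_{jj} \sim e_{11}$. Setting $e_{jl} = e_{j1} e_{l1}^{*}$ yields a system of matrix units satisfying $e_{jl} e_{mn} = \delta_{lm} e_{jn}$ and $e_{jl}^{*} = e_{lj}$, and the linear map $e_{jl} \mapsto E_{jl}$ onto the standard matrix units of $\mathcal M_k(\Bbb C)$ extends to the desired $*$-isomorphism. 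Surjectivity of the span follows because any $x \in \mathcal B_i$ is recovered from its compressions, which lie in the one-dimensional corners $e_{jj}\mathcal B_i e_{ll} = \Bbb C e_{jl}$.

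The step I expect to be the main obstacle is the comparison claim that all minimal projections in the centerless algebra $\mathcal B_i$ are equivalent, since this is where triviality of the center is genuinely used; concretely I would invoke Murray--von Neumann comparison theory, comparing two minimal projections $e,f$ and noting that the central-valued obstruction to $e \preceq f$ or $f \preceq e$ lives in $Z(\mathcal B_i) = \Bbb C z_i$ and so cannot separate them, whence $e \sim f$. Everything else---the reduction over the center and the bookkeeping of the matrix units---is routine linear algebra once this comparison is in hand.
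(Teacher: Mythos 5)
The paper gives you nothing to compare against here: Lemma 2.1 is stated as ``a well-known fact'' with no proof at all --- it is the classical structure theorem for finite-dimensional C$^*$-algebras, the C$^*$-analogue of the Artin--Wedderburn theorem, and the authors simply cite and use it. Judged on its own merits, your argument is the standard textbook proof and is essentially correct: represent $\mathcal{B}$ faithfully on a finite-dimensional Hilbert space, split the unit into minimal central projections to reduce to the case of trivial center, then build a system of matrix units from a maximal orthogonal family of minimal projections together with the partial isometries furnished by comparison theory. You also correctly isolate the two points carrying real content: that the corner $e\mathcal{B}_ie$ of a minimal projection equals $\mathbb{C}e$ (which needs the fact that a finite-dimensional C$^*$-algebra is spanned by its projections, via the spectral theorem for self-adjoint elements with finite spectrum), and that triviality of the center forces any two minimal projections to be Murray--von Neumann equivalent (comparison gives $e\precsim f$ or $f\precsim e$, and minimality of the dominating projection upgrades this to equivalence). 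Two small repairs are needed. First, the universal representation of $\mathcal{B}$ is the direct sum of GNS representations over \emph{all} states and is therefore not finite-dimensional; you must use your alternative suggestion --- by finite-dimensionality a finite separating family of states exists, and the corresponding finite direct sum of GNS representations is faithful and acts on a finite-dimensional space. Second, you use a unit throughout (in the representation and in writing $1=z_1+\cdots+z_r$), so you should either take unitality as part of the hypothesis (harmless for the paper, whose finite-dimensional subalgebras all contain $I_{\mathcal{A}}$) or record the standard fact that a finite-dimensional C$^*$-algebra is automatically unital, e.g.\ by extracting a norm-convergent subnet of an approximate unit. Neither point affects the structure of the argument.
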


\begin{definition}
Suppose
 $${\mathcal  B}\cong {\mathcal  M}_{k_1}({\Bbb C})\oplus \cdots \oplus {\mathcal  M}_{k_r}({\Bbb
 C}) $$ is a finite-dimensional C$^*$-algebra for some positive integers $r, k_1,\ldots, k_r$.
Define the {\it rank} of $\mathcal  B$ to be $$\mbox{Rank}({\mathcal  B})=k_1+\cdots
 +k_r,$$
  the {\it subrank} of $\mathcal  B$ to be $$\mbox{SubRank}({\mathcal  B})
 =\min\{k_1, \ldots, k_r\}.$$
\end{definition}

 The
following definition is Definition 1.2 in \cite{b}.
\begin{definition}
A separable unital C$^{*}$-algebra $\mathcal{A}$ with the unit
$I_{\mathcal A}$ is approximately divisible if, for every
$x_{1},\ldots,x_{n}\in\mathcal{A}$ and $\varepsilon>0$, there is a
finite-dimensional C$^{*}$-subalgebra $\mathcal{B}$ of $\mathcal{A}$
 such that

(1) $I_{\mathcal  A}\in {\mathcal  B}$;

(2) $\mbox{SubRank}({\mathcal  B})\geq 2$;

 (3) $\|x_{i}y-yx_{i}\|<\varepsilon$
for $i=1,\ldots, n$ and all $y\in {\mathcal  B}$ with $\|y\|\leq 1$.
\end{definition}

The following proposition is also taken from Theorem 1.3 and
Corollary 2.10 in \cite{b}.

\begin{proposition}
\label{proposition, ad form}(\cite{b}) Let $\mathcal{A}$ be a unital
separable approximately divisible C$^{\ast}$-algebra with the unit
$I_{\mathcal  A}$. Then there exists an increasing sequence
$\{{\mathcal A}_m\}_{m=1}^{\infty}$ of subalgebras of $\mathcal  A$
such that

(1) $\mathcal{A}=\overline{\cup_m{\mathcal{A}}_{m}}^{\|\cdot\|}$,

(2) for any positive integer $m$, ${\mathcal{A}}_{m}^{\prime}%
\cap{\mathcal{A}}_{m+1}$ contains a  finite-dimensional
C$^{\ast}$-subalgebra $\mathcal  B$ with $I_{\mathcal  A}\in {\mathcal  B}$ and
$\mbox{SubRank}({\mathcal  B})\geq 2$,

(3) for any positive integers $m$ and $k$, there is a finite-dimensional C$^{\ast}%
$-subalgebra $\mathcal{B}$ of
${\mathcal{A}}_{m}^{\prime}\cap\mathcal{A}$ with $I_{\mathcal  A}\in
{\mathcal  B}$ and $\mbox{SubRank}({\mathcal  B})\geq k$.
\end{proposition}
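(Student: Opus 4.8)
The plan is to derive the statement from the definition of approximate divisibility by an inductive construction, following the argument of \cite{b}. Since $\mathcal{A}$ is separable, I would fix a countable dense sequence $\{a_j\}_{j=1}^{\infty}$ in $\mathcal{A}$ and a summable sequence of tolerances $\varepsilon_m>0$. Then, by induction on $m$, I build a nested sequence of subalgebras $\mathcal{A}_m$ together with unital finite-dimensional subalgebras of $\mathcal{A}$ of SubRank at least $2$ that are placed in the relative commutant of $\mathcal{A}_m$. At the inductive step one applies the definition of approximate divisibility to the finite set consisting of a finite generating set of the already-constructed $\mathcal{A}_m$ together with $a_1,\dots,a_{m+1}$, and tolerance $\varepsilon_{m+1}$; this produces a finite-dimensional $\mathcal{B}$ with $I_{\mathcal{A}}\in\mathcal{B}$, $\mathrm{SubRank}(\mathcal{B})\geq 2$, that $\varepsilon_{m+1}$-commutes with all of them.

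The decisive point is to upgrade this \emph{approximate} commutation to \emph{exact} commutation, so that $\mathcal{B}$ genuinely lands in $\mathcal{A}_m'\cap\mathcal{A}$. Here I would use the conditional expectation $E_{\mathcal{B}}\colon\mathcal{A}\to\mathcal{B}'\cap\mathcal{A}$ obtained by averaging over the compact unitary group of the finite-dimensional algebra $\mathcal{B}$, namely $E_{\mathcal{B}}(x)=\int_{\mathcal{U}(\mathcal{B})}uxu^{*}\,du$. For any $x$ one has $\|E_{\mathcal{B}}(x)-x\|\leq\sup_{u\in\mathcal{U}(\mathcal{B})}\|uxu^{*}-x\|$, which is $O(\varepsilon_{m+1})$ precisely because condition (3) of the definition bounds $\|x u-u x\|$ for every $u$ in the unit ball of $\mathcal{B}$. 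Replacing the generators $a_i$ by their images under the relevant expectations moves them by a controlled, summable amount and places them in the commutant of $\mathcal{B}$; defining $\mathcal{A}_{m+1}$ to be generated by these corrected elements (together with the earlier algebras) then yields $\mathcal{B}\subseteq\mathcal{A}_m'\cap\mathcal{A}_{m+1}$, which is property (2). Because the perturbations are summable and each $a_j$ is eventually approximated to within $\sum_{m\geq N}\varepsilon_m$, the closure of $\bigcup_m\mathcal{A}_m$ recaptures all of $\mathcal{A}$, giving property (1).

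For property (3) I would iterate within a single fixed relative commutant. Having fixed $\mathcal{A}_m$, I apply the definition repeatedly: first obtain a finite-dimensional $\mathcal{B}^{(1)}$ almost commuting with a generating set of $\mathcal{A}_m$; then obtain $\mathcal{B}^{(2)}$ almost commuting with that generating set \emph{and} with a system of matrix units for $\mathcal{B}^{(1)}$; and so on. Since each $\mathcal{B}^{(i)}$ is genuinely finite-dimensional, the rigidity (semiprojectivity) of finite-dimensional C$^*$-algebras lets me perturb them slightly so that the $\mathcal{B}^{(i)}$ mutually commute and their matrix units commute with the generators of $\mathcal{A}_m$ after applying the averaging expectation; the algebra they generate is then a tensor product $\mathcal{B}^{(1)}\otimes\cdots\otimes\mathcal{B}^{(j)}$ inside $\mathcal{A}_m'\cap\mathcal{A}$ with $\mathrm{SubRank}\geq 2^{j}\geq k$. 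This is the content of Corollary 2.10 of \cite{b}.

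The step I expect to be the main obstacle is exactly the passage from approximate to exact commutation while keeping all three requirements simultaneously true for a single fixed sequence $\{\mathcal{A}_m\}$. The conditional expectation controls the size of each perturbation, but one must order the construction (a telescoping/reindexing of the dense sequence into progressively deeper relative commutants) so that perturbing the generators of $\mathcal{A}_{m+1}$ does not destroy the exact relation $\mathcal{B}\subseteq\mathcal{A}_m'\cap\mathcal{A}$ secured at earlier stages, nor the density of $\bigcup_m\mathcal{A}_m$. Managing this bookkeeping, and verifying that the straightening of almost-commuting finite-dimensional subalgebras can be carried out with perturbations small enough to preserve everything, is the technical heart of the argument.
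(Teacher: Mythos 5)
The paper itself does not prove this proposition: it is imported wholesale from \cite{b} (Theorem 1.3 and Corollary 2.10 there), so your proposal can only be measured against that source. Your toolkit is the right one — the averaging expectation $E_{\mathcal{B}}(x)=\int_{\mathcal{U}(\mathcal{B})}uxu^{*}\,du$ onto $\mathcal{B}'\cap\mathcal{A}$ with the bound $\|E_{\mathcal{B}}(x)-x\|\leq\sup_{u\in\mathcal{U}(\mathcal{B})}\|ux-xu\|$, the stability of finite-dimensional C$^*$-algebras (Lemma 2.3 of \cite{b}, reproduced as Lemma \ref{lemma,almost matrix unit is matrix unit} in this paper) to straighten almost-commuting copies into exactly commuting ones, and the count $\mathrm{SubRank}\geq 2^{j}\geq k$ for part (3). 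But the inductive step you present for part (2) proves commutation in the wrong direction, and the issue you flag in your last paragraph is not bookkeeping; it is the point where your ordering of the construction collapses.

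Concretely: once $\mathcal{A}_m$ has been fixed at stage $m$, part (2) requires a finite-dimensional $\mathcal{B}$ lying inside $\mathcal{A}_m'\cap\mathcal{A}$. Averaging over $\mathcal{U}(\mathcal{B})$ moves \emph{elements} into $\mathcal{B}'\cap\mathcal{A}$; nothing moves $\mathcal{B}$ into $\mathcal{A}_m'\cap\mathcal{A}$, and there is no analogous expectation onto $\mathcal{A}_m'\cap\mathcal{A}$, since $\mathcal{U}(\mathcal{A}_m)$ is not compact. If instead you replace the generators of $\mathcal{A}_m$ by their $E_{\mathcal{B}}$-images, you have changed $\mathcal{A}_m$ itself, and the exact inclusion $\mathcal{B}_m\subseteq\mathcal{A}_{m-1}'\cap\mathcal{A}_m$ secured at the previous stage is destroyed, because $E_{\mathcal{B}}$ does not preserve commutation with $\mathcal{B}_m$ (as $\mathcal{B}$ only approximately commutes with $\mathcal{B}_m$). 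The repair, which is how \cite{b} argues, is to invert the order of the whole construction: first build by induction the entire sequence $\mathcal{B}_1,\mathcal{B}_2,\dots$ of unital finite-dimensional subalgebras with $\mathrm{SubRank}\geq 2$ that are \emph{exactly mutually commuting} — choose $\mathcal{B}_{n+1}$ almost commuting with $a_1,\dots,a_{n+1}$ and with the matrix units of $\mathcal{B}_1,\dots,\mathcal{B}_n$, push its matrix units into $(\mathcal{B}_1\cup\dots\cup\mathcal{B}_n)'\cap\mathcal{A}$ by $E_{\mathcal{B}_n}\cdots E_{\mathcal{B}_1}$, and straighten them there with stability — and only afterwards define $\mathcal{A}_m$, as the algebra generated by $\mathcal{B}_1,\dots,\mathcal{B}_m$ together with the limits $\tilde a_j^{(m)}=\lim_{N\to\infty}E_{\mathcal{B}_N}\cdots E_{\mathcal{B}_{m+1}}(a_j)$. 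These limits exist because the expectations mutually commute (the $\mathcal{B}_n$ do) and satisfy $\|[u,E_{\mathcal{B}_n}(x)]\|\leq\|[u,x]\|$ whenever $u$ commutes with $\mathcal{B}_n$, so the successive displacements are summable; then every generator of $\mathcal{A}_m$ commutes with \emph{all} $\mathcal{B}_n$ for $n>m$, which yields (2) and (3) at once, while $\|\tilde a_j^{(m)}-a_j\|\leq\sum_{n>m}\varepsilon_n$ yields (1). Your sketch names this reordering as the expected obstacle but does not carry it out, so as written the proposal has a genuine gap at its central step.
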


\section{Generator problem of approximately divisible C$^*$-algebras}

In this section we prove that every unital separable approximately
divisible C$^*$-algebra is singly generated, i.e., generated by two
self-adjoint elements.

\begin{theorem}\label{theorem, singly generated} If $\mathcal  A$ is a unital separable approximately divisible
C$^*$-algebra, then $\mathcal  A$ is singly generated.
\end{theorem}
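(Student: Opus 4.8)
The plan is to adapt the Olsen--Zame argument for $\mathcal A\otimes\mathcal B$ with $\mathcal B$ a UHF algebra (\cite{olsen}) to the approximately divisible setting, using Proposition \ref{proposition, ad form} as the structural substitute for the tensor factorization. First I would fix a countable dense subset $\{d_n\}_{n\geq 1}$ of the unit ball of $\mathcal A$ with $d_n\in\mathcal A_{m_n}$ (possible by part (1) of Proposition \ref{proposition, ad form}), arranged so that $m_1\le m_2\le\cdots$. The essential point is that the relative commutant conditions (2) and (3) manufacture finite-dimensional C$^*$-subalgebras that commute \emph{exactly}, not merely approximately, with a prescribed initial piece of $\mathcal A$; this is precisely what replaces the commuting tensor factor $1\otimes\mathcal B$ present in the Olsen--Zame situation, and it is why approximate divisibility is strong enough to force single generation.

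Second, I would extract a mutually commuting family of finite-dimensional subalgebras. Using (2), for each $m$ choose a finite-dimensional $\mathcal B_m\subseteq\mathcal A_m'\cap\mathcal A_{m+1}$ with $I_{\mathcal A}\in\mathcal B_m$ and $\mbox{SubRank}(\mathcal B_m)\ge 2$. Since $\mathcal B_j\subseteq\mathcal A_{j+1}\subseteq\mathcal A_m$ for $j<m$ while $\mathcal B_m\subseteq\mathcal A_m'$, the family $\{\mathcal B_m\}$ pairwise commutes, and each $\mathcal B_m$ commutes with every $d_n$ for which $m_n\le m$. Writing $\mathcal B_m\cong M_{k_1}(\Bbb C)\oplus\cdots\oplus M_{k_r}(\Bbb C)$ with all $k_i\ge 2$, I would take the \emph{full} system of matrix units $\{e^{(i)}_{\alpha\beta}\}$ of each block and sum over blocks to produce orthogonal equivalent projections and connecting partial isometries lying in $\mathcal B_m$, hence commuting with $\mathcal A_m$.

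Third, I would assemble the two self-adjoint generators. One self-adjoint element $x$ is a norm-convergent sum of diagonal operators built from the diagonal matrix units $e^{(i)}_{\alpha\alpha}$ of the $\mathcal B_m$, with distinct, rapidly separating coefficients chosen so that each individual diagonal matrix unit is recovered as a spectral projection of $x$. The other self-adjoint element $y$ combines the connecting partial isometries of the $\mathcal B_m$ (which, once the diagonal projections are available in $C^*(x)$, regenerate all off-diagonal matrix units of every block) with corner placements of the dense sequence, using coefficients that force norm convergence and keep the summands spectrally separated. Because $d_n$ commutes \emph{exactly} with every $e^{(i)}_{\alpha\beta}$ of $\mathcal B_m$ for $m\ge m_n$, conjugating the stored corner $d_n\,e^{(i)}_{11}$ by the full block matrix units and summing over all blocks and all diagonal indices returns
$$\sum_{i}\sum_{\alpha} e^{(i)}_{\alpha 1}\,\bigl(d_n\, e^{(i)}_{11}\bigr)\,e^{(i)}_{1\alpha}=d_n\,I_{\mathcal A},$$
so that $d_n\in C^*(x,y)$; density of $\{d_n\}$ then yields $C^*(x,y)=\mathcal A$, and splitting $y$ into real and imaginary parts (if one prefers to keep $y$ genuinely self-adjoint) gives two self-adjoint generators.

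The main obstacle is the third step: carrying out the infinite construction so that (i) the defining sums for $x$ and $y$ converge in norm, (ii) distinct pieces of data do not interfere, i.e. the spectral projections of $x$ really isolate the individual matrix units while the corners really isolate the individual $d_n$, and (iii) nothing is lost to the ``defect'' coming from the fact that $\mathcal B_m$ need not admit a \emph{unital} copy of a single matrix algebra, since the block sizes $k_i$ need not be equal or even. The key device for (iii) is to work with the full matrix units of each block rather than a single $M_k$, so that the recovery identity above holds on the nose with right-hand side exactly $d_n I_{\mathcal A}$; the bookkeeping that keeps these retrievals consistent across the nested levels while preserving norm convergence is the technical heart of the argument.
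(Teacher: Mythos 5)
Your overall strategy mirrors the paper's: extract mutually commuting finite-dimensional subalgebras via Proposition \ref{proposition, ad form}, load separated multiples of diagonal matrix units into one self-adjoint element so that they can be recovered spectrally, and store the ladder operators together with approximants of a dense sequence in the second element. However, the step you defer as ``the technical heart'' is precisely where the scheme you describe breaks down, and the failure is structural rather than a matter of bookkeeping.

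Concretely, your $y$ is a sum of ladder operators $e^{(j),m'}_{\beta,\beta+1}+e^{(j),m'}_{\beta+1,\beta}$ over all levels $m'$ together with stored corners $d_n e^{(i),m(n)}_{11}$. To invoke your recovery identity you must first produce the summand $d_n e^{(i),m(n)}_{11}$, and exact matrix units, inside $C^*(x,y)$. The natural move is to compress $y$ by $e^{(i),m(n)}_{11}$, but the levels only commute, they are not orthogonal: for $m'\neq m(n)$ one has $e^{(i),m(n)}_{11} \left(e^{(j),m'}_{\beta,\beta+1}\right) e^{(i),m(n)}_{11} = e^{(j),m'}_{\beta,\beta+1}\, e^{(i),m(n)}_{11}$, which in general is nonzero; so every other level's ladder operators survive the compression, and for $m'<m(n)$ their coefficients are far larger than the coefficient of the stored piece you are trying to isolate. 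Symmetrically, when you extract the level-$m$ ladder operators by compressing $y$ between $e^{(j),m}_{\beta\beta}$ and $e^{(j),m}_{\beta+1,\beta+1}$, every stored piece $d_n e^{(i),m(n)}_{11}$ with $m_n>m$ survives, because such $d_n$ do not commute with $\mathcal B_m$. Hence neither the matrix units nor the stored data come out exactly, and your assertion that $d_n\in C^*(x,y)$ (rather than some approximate statement needing its own error analysis) is unjustified. The paper kills this interference with two devices your proposal lacks: first, all level-$n$ data --- the ladder part $b_n$ and the stored part $z_n$ --- is compressed under the product $p_1\cdots p_{n-1}$ of distinguished corner projections of all earlier levels, and $z_n$ is in addition placed in off-diagonal slots of $\mathcal B_n$ orthogonal to $p_n$ and to the $e^{(n,s)}_{11}$, forcing the exact annihilation relations (F1)--(F3) that make every step of the inductive extraction in Claim \ref{claim, algebra is in it} exact; second, since the approximant $y_j^{(n)}$ does not commute with the earlier algebras, what is stored is not $y_j^{(n)}$ cut to a corner but the entire family of its matrix coefficients $\alpha(y_j^{(n)})$ relative to $\mathcal B_1,\ldots,\mathcal B_{n-1}$, one per off-diagonal slot. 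This is exactly why the paper needs $\mbox{SubRank}(\mathcal B_n)\geq n\cdot(\mbox{Rank}(\mathcal B_1))^2\cdots(\mbox{Rank}(\mathcal B_{n-1}))^2+3$, a growth condition manufactured by multiplying together consecutive relative-commutant algebras from Proposition \ref{proposition, ad form}; your construction keeps $\mbox{SubRank}(\mathcal B_m)\geq 2$ throughout, which does not provide remotely enough slots for such a storage scheme. Without either this exact-orthogonality architecture or, as an alternative, a fully worked-out approximate-recovery argument with inductively controlled coefficients, the proposal has a genuine gap at its main step.
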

\begin{proof} Since $\mathcal  A$ is separable, there exists a sequence of
self-adjoint elements $\{x_i\}_{i=1}^{\infty}\subset \mathcal A$
that generate $\mathcal  A$ as a C$^*$-algebra.
\begin{claim}\label{claim,singly,exists sequence}There exists a sequence of finite-dimensional subalgebras
$\{{\mathcal  B}_n\}_{n=1}^{\infty}$ of $\mathcal  A$ so that the following
hold:

(1) $\forall \ n \in \Bbb N$,   $I_{\mathcal  A}\in {\mathcal B}_n$,
where $I_{\mathcal  A}$ is the unit of $\mathcal  A$;

(2) $\mbox{SubRank}({\mathcal  B}_1)\geq 3$, and for any $n\geq 2$,
$$\mbox{SubRank}({\mathcal  B}_n)\geq n\cdot (\mbox{Rank}({\mathcal  B}_1))^2\cdots
(\mbox{Rank}({\mathcal  B}_{n-1}))^2+3;$$

(3) if $n\neq m$, then ${\mathcal  B}_n$ commutes with ${\mathcal  B}_m$;

(4) for any   $n\in\Bbb N$, $$dist(x_p, {\mathcal
B}_n^{\prime}\cap {\mathcal  A})< 2^{-n},\hspace{0.5cm} \forall 1\leq
p\leq n,$$ where $dist(x_p, {\mathcal  B}_n'\cap {\mathcal
A})=\inf\{\|x_p-y\|: y\in {\mathcal  B}_n'\cap {\mathcal  A}\}$.
\end{claim}
{\it Proof of the claim}.
It follows from  Proposition \ref{proposition, ad
form} that
 there exists an increasing sequence $\{{\mathcal
A}_m\}_{m=1}^{\infty}$ of subalgebras of $\mathcal  A$ such that
\begin{enumerate}
\item [(a)]
  $\mathcal{A}=\overline{\cup_m{\mathcal{A}}_{m}}^{\|\cdot\|}$,
\item [(b)] for any positive integer $m$, ${\mathcal{A}}_{m}^{\prime}%
\cap{\mathcal{A}}_{m+1}$ contains a  finite-dimensional
C$^{\ast}$-subalgebra $\mathcal  B$ with $I_{\mathcal  A}\in {\mathcal  B}$ and
$\mbox{SubRank}({\mathcal  B})\geq 2$,
\item [(c)] for any positive integers $m$ and $k$, there is a finite-dimensional C$^{\ast}%
$-subalgebra $\mathcal{B}$ of
${\mathcal{A}}_{m}^{\prime}\cap\mathcal{A}$ with $I_{\mathcal  A}\in
{\mathcal  B}$ and $\mbox{SubRank}({\mathcal  B})\geq k$.
\end{enumerate}
Instead of proving Claim 3.1 directly, we will prove  a stronger result by replacing the statement
(3) in Claim 3.1 with the following  one:

(3$'$) there exist two increasing sequences $\{s_n\}_{n=1}^{\infty}$
and $\{t_n\}_{n=1}^{\infty}$ of positive integers such that, for any
  $n\in \Bbb N$, $s_n\leq t_n\leq s_{n+1}$ and ${\mathcal
B}_n\subseteq {\mathcal  A}_{s_n}'\cap {\mathcal  A}_{t_n}$.

\vspace{0.2cm}

\noindent We prove this stronger claim by using the induction on $n$.

\textbf{Base step:} Note that ${\mathcal
A}=\overline{\cup_m{\mathcal  A}_m}^{\|\cdot\|}$. For $x_1\in
\mathcal A$, there are a positive integer $s_1$ and a self-adjoint
element
  $y_1^{(1)}\in{\mathcal  A}_{s_1}$  such that
$\|x_1-y_1^{(1)}\|<\frac{1}{2}$. By the restriction (b) on the
subalgebras $\{{\mathcal  A}_m\}_{m=1}^\infty$, we know that there
exist two finite-dimensional subalgebras ${\mathcal C}_{s_1+1},
{\mathcal  C}_{s_1+2}$ in $\mathcal  A$ such that,

(i) $I_{\mathcal  A} \in {\mathcal  C}_{s_1+1}$ and $I_{\mathcal  A} \in {\mathcal  C}_{s_1+2}$;

(ii) ${\mathcal  C}_{s_1+1}\subseteq {\mathcal  A}_{s_1}'\cap
{\mathcal  A}_{s_1+1}$ and ${\mathcal  C}_{s_1+2}\subseteq {\mathcal
A}_{s_1+1}'\cap {\mathcal A}_{s_1+2}$;

(iii) $\mbox{SubRank}({\mathcal  C}_{s_1+1})$ and $\mbox{SubRank}({\mathcal
C}_{s_1+2})$ are at least 2.
 \newline Let $t_1=s_1+2$, ${\mathcal  B}_1=C^*({\mathcal  C}_{s_1+1},{\mathcal  C}_{t_1} )$ the $^*$-subalgebra generated by
 ${\mathcal  C}_{s_1+1}$ and ${\mathcal  C}_{t_1} $ in $\mathcal A$. Then $\mbox{SubRank}({\mathcal
B}_1)\geq 3$ and ${\mathcal  B}_1\subseteq {\mathcal
A}_{s_1}'\cap{\mathcal A}_{t_1}$.

\textbf{Inductive step:} Now suppose the stronger claim is true when
$n\leq k-1$, i.e., there exists a family  of finite-dimensional
C$^*$-algebras $\{{\mathcal  B}_n\}_{n=1}^{k-1}$ of $\mathcal  A$,
and two increasing sequences of positive integers
$\{s_n\}_{n=1}^{k-1}$ and $\{t_n\}_{n=1}^{k-1}$ that satisfy  (1),
(2), (3$'$) and (4).

For $x_1, \ldots, x_{k}$ in $\mathcal  A$, from the restriction (a)
on $\{{\mathcal  A}_m\}_{m=1}^\infty\subseteq \mathcal A$, we know
that there are a positive integer $s_{k}$ with $s_{k}\geq t_{k-1}$
and self-adjoint elements $y_1^{(k)}, \ldots, y_{k}^{(k)}$ in
${\mathcal  A}_{s_{k}}$ such that $\|x_i-y_i^{(k)}\|<2^{-k}$ for
$1\leq i\leq k$. From the restriction (b) on $\{{\mathcal
A}_m\}_{m=1}^\infty\subseteq \mathcal A$,
 there exists a family $\{{\mathcal
C}_{s_{k}+1},{\mathcal  C}_{s_{k}+2},\ldots\}$ of finite-dimensional
subalgebras in $\mathcal  A$ such that,

(i) $I_{\mathcal  A} \in {\mathcal  C}_{s_{k}+i}$, $\ \forall \ i\ge 1$;

(ii) ${\mathcal  C}_{s_{k}+i}\subseteq {\mathcal  A}_{s_{k}+i-1}'\cap {\mathcal
A}_{s_{k}+i}$, $\ \forall \ i\ge 1$;

(iii) $\mbox{SubRank}({\mathcal  C}_{s_{k}+i})\geq 2$,  $\ \forall \
i\ge 1$.\newline By (ii), we know $\{{\mathcal
C}_{s_{k}+1},{\mathcal  C}_{s_{k}+2},\ldots\}$ is a commuting
sequence of subalgebras of $\mathcal A$. Combining with (iii), we
get that there is a positive integer $t_{k}$ such that
$$\mbox{SubRank}\left(C^*({\mathcal  C}_{s_{k}+1}, \ldots, {\mathcal
C}_{t_{k}})\right)\geq k\cdot(\mbox{Rank}({\mathcal  B}_1))^2\cdots
(\mbox{Rank}({\mathcal  B}_{k-1}))^2+3,$$ where $ C^*({\mathcal  C}_{s_{k}+1}, \ldots, {\mathcal
C}_{t_{k}}) $ is the C$^*$-subalgebra generated by ${\mathcal  C}_{s_{k}+1}, \ldots, {\mathcal
C}_{t_{k}}$ in $\mathcal A$. Moreover, $I_{\mathcal  A} \in  C^*({\mathcal
C}_{s_{k}+1}, \ldots, {\mathcal  C}_{t_{k}})$ is a finite-dimensional
C$^*$-subalgebra in ${\mathcal  A}_{s_{k}}'\cap{\mathcal  A}_{t_{k}}.$ Let
$${\mathcal  B}_{k}=C^*({\mathcal  C}_{s_{k}+1}, \ldots, {\mathcal  C}_{t_{k}})$$ and it is not hard to check that
$\mathcal B_1,\ldots, \mathcal B_k$ satisfy the conditions (1), (2), (3$'$) and (4) in the stronger claim.  {\it This completes
the proof of the claim.}

\vspace{0.8cm}
Let $\{\mathcal B_n\}_{n=1}^\infty$ be as in Claim 3.1.
 For any positive integer $n$, since ${\mathcal  B}_n$ is a finite-dimensional C$^*$-algebra,
 there exist positive integers $ r_n$ and $k_1^{(n)}, \ldots,
 k_{r_n}^{(n)}$ such that
 $${\mathcal  B}_n\cong{\mathcal  M}_{k_1^{(n)}}(\Bbb C)\oplus \cdots \oplus {\mathcal  M}_{k_{r_n}^{(n)}}({\Bbb
 C}).$$ Let $\{e_{ij}^{(n,s)}: 1\leq i,j\leq k_s^{(n)}\}$ be the canonical system  of matrix units for $\mathcal M_{k_s^{(n)}}$.
  If there is no confusion arising, we can further assume that $\{e_{ij}^{(n,s)}: 1\leq i,j\leq k_s^{(n)}, 1\leq s\leq r_{n}\}$
  consists a system of matrix units of $\mathcal B_n$.
  Note that ${\mathcal  B}_n$ contains the unit $I_{\mathcal  A}$ of ${\mathcal  A}$, so
  $$\sum_{s=1}^{r_n}\sum_{i=1}^{k_s^{(n)}}e_{ii}^{(n,s)}=I_{\mathcal
  A}.$$

   Define \begin{equation} p_n=\sum_{s=1}^{r_n}e_{k_s^{(n)},k_s^{(n)}}^{(n,s)}\ \ \ \  { for } \
 n\geq 1 .\end{equation} Then $p_n$ is a projection of ${\mathcal  B}_n$.
 It is clear that \begin{equation}p_ne_{11}^{(n,s)}=0 \hspace{0.5cm} {for}\  1\leq s\leq r_n.\end{equation}

\begin{claim}\label{claim, elements is in it} Let $\{x_n\}_{n=1}^\infty$, $\{{\mathcal  B}_n\}_{n=1}^{\infty}$,
 $\{r_n\}_{n=1}^{\infty}$ and $\{p_n\}_{n=1}^{\infty}$ be defined as
above. For any positive integer $n$, there exists $z_n=z_n^*\in\mathcal
A$ with $\|z_n\|=2^{-(r_1+\cdots +r_n+1)}$ so that

(i) $(I_{\mathcal  A}-p_n)p_{n-1}\cdots p_1\cdot z_n\cdot p_1\cdots
p_{n-1}(I_{\mathcal  A}-p_n)=z_n,$

(ii) $dist(x_j, C^*({\mathcal  B}_1, \ldots, {\mathcal  B}_{n}, z_n))< 2^{-n}$
for $1\leq j\leq n$, where $ C^*({\mathcal  B}_1, \ldots, {\mathcal  B}_{n}, z_n)$ is the C$^*$-subalgebra generated
by ${\mathcal  B}_1, \ldots, {\mathcal  B}_{n}, z_n$ in $\mathcal A$.
\end{claim}

{\it Proof of the claim}. By Claim \ref{claim,singly,exists
sequence}, for any positive integer $n$ and $x_1,\ldots, x_n$, we know  $$dist(x_j, {\mathcal  B}_n'\cap
{\mathcal  A})< 2^{-n} \ \ \ \ \ for \ \ 1\leq j\leq n. $$  Thus there exist
self-adjoint elements $y_1^{(n)}, \ldots, y_n^{(n)}$ in $\mathcal  A$
that commute with ${\mathcal  B}_n$ and $$\|x_j-y_j^{(n)}\|< 2^{-n} \ \ \ \  for \ \ 1\leq j\leq n. $$

 Let
\begin{equation}z_1=\frac 1{ 2^{1+r_1}} \cdot \frac{\sum_{s=1}^{r_1}e_{22}^{(1,s)}y_1^{(1)}}{  \|
\sum_{s=1}^{r_1}e_{22}^{(1,s)}y_1^{(1)} \|}.\end{equation} With $\mbox{SubRank}({\mathcal
B}_1)\geq 3, $ we have
$$(I_{\mathcal  A}-p_1)\cdot z_1\cdot (I_{\mathcal  A}-p_1)=z_1.$$
By the equation (3) and the fact that $y_1^{(1)}$ commutes with
$\mathcal B_1$, we know
$$y_1^{(1)}=\left( { 2^{1+r_1}} \cdot  {  \| \sum_{s=1}^{r_1}e_{22}^{(1,s)}y_1^{(1)} \|}\right) \cdot
\left (\sum_{s=1}^{r_1}\sum_{i=1}^{k_s^{(1)}} e_{i,2}^{(1,s)}\cdot  z_1\cdot e_{2,i}^{(1,s)}\right ).$$
Thus
we know that $y_1^{(1)}$ is in the C$^*$-algebra generated by ${\mathcal
B}_1$ and $z_1$, whence $$dist(x_1, C^*({\mathcal  B}_1, z_1))\le dist (x_1, y_1^{(1)})< 2^{-1}.$$

Now let us construct $z_n$ for any positive integer $n\geq 2$. Let
\begin{eqnarray*}\Delta_{n-1}&=&\{(i_1,s_1)\times(j_1,
t_1)\times\cdots\times
(i_{n-1},s_{n-1})\times (j_{n-1},t_{n-1}):\\
&& 1\leq i_1\leq k_{s_1}^{(1)},
 1\leq j_1\leq k_{t_1}^{(1)},1\leq s_1,t_1\leq r_1,\\
 &&\cdots, 1\leq i_{n-1}\leq k_{s_{n-1}}^{(n-1)},
 1\leq j_{n-1}\leq k_{t_{n-1}}^{(n-1)},1\leq s_{n-1},t_{n-1}\leq r_{n-1}\}.\end{eqnarray*}
 It is not hard to check that the cardinality of the set $\Delta_{n-1}$ satisfies
 $$Card (\Delta_{n-1})= \prod_{i=1}^{n-1} (Rank(\mathcal B_i))^2 .$$    Hence, for any $1\leq j\leq n$, there is a one-to-one mapping
$f_j^{(n)}$  from the index set $ \Delta_{n-1}$ onto the set $$\{i\in \Bbb N\ | (j-1)\cdot Card (\Delta_{n-1})+2\le i\le
j\cdot Card (\Delta_{n-1}) +1 \}.$$

For any index  $$\alpha=(i_1,s_1)\times(j_1, t_1)\times\cdots\times
(i_{n-1},s_{n-1})\times (j_{n-1},t_{n-1})\in\Delta_{n-1}$$ and any
$1\leq j\leq n$, we should define
\begin{equation}\alpha(y_j^{(n)})=e_{k_{s_{n-1}}^{(n-1)},i_{n-1}}^{(n-1, s_{n-1})}\cdots
e_{k_{s_1}^{(1)},i_1}^{(1,s_1)}\cdot y_j^{(n)}\cdot
e_{j_1,k_{t_1}^{(1)}}^{(1,t_1)} \cdots
e_{j_{n-1},k_{t_{n-1}}^{(n-1)}}^{(n-1, t_{n-1})}\in \mathcal A.\end{equation} By Claim 3.1, we know  that
$\mbox{SubRank}({\mathcal  B}_n)\geq n \cdot Card(\Delta_{n-1})+3$. It follows that
\begin{eqnarray}z_{n}&=&c_n\cdot
 \sum_{s=1}^{r_n}\sum_{j=1}^{n}\sum_{\alpha\in\Delta_{n-1}}\left(
e_{f_j^{(n)}(\alpha), f_j^{(n)}(\alpha)+1}^{(n,s)}\cdot
\alpha(y_j^{(n)})\right. \left.+(e_{f_j^{(n)} (\alpha),
f_j^{(n)}(\alpha)+1}^{(n,s)}\cdot
\alpha(y_j^{(n)}))^*\right)\end{eqnarray}  is well defined and
contained in $\mathcal A$, where   $c_n$ is a constant such that
\begin{equation}\|z_{n}\|=2^{-(r_1+\cdots +r_n+1)}.\end{equation}
From the construction of $z_n$, it follows that $z_n=z_n^*$ and
$$ z_{n}=(I_{\mathcal  A}-p_n)\cdot p_{n-1}\cdots p_1\cdot z_n\cdot p_1\cdots
p_{n-1}\cdot(I_{\mathcal  A}-p_n).$$

To prove  $dist(x_j, C^*({\mathcal  B}_1, \ldots, {\mathcal  B}_{n}, z_n))<
2^{-n}$ for $1\leq j\leq n$, it is sufficient to prove that
$\{y_1^{(n)}, \ldots, y_n^{(n)}\}\subseteq C^*({\mathcal  B}_1, \ldots,
{\mathcal  B}_{n}, z_n)$. Because ${\mathcal  B}_n$ commutes with $y_1^{(n)},
\ldots, y_n^{(n)}$,   for any $\alpha\in\Delta_{n-1}$ and $1\leq
j\leq n$, from the equation (5) it follows that
$$\alpha(y_j^{(n)})=\sum_{s=1}^{r_n}\sum_{i=1}^{k_s^{(n)}} e_{i,f_j^{(n)}(\alpha)}^{(n,s)}\cdot \left(\frac{1}{c_n}z_n\right)
\cdot e_{f_j^{(n)}(\alpha)+1,i}^{(n,s)}.$$
This implies that $\alpha(y_j^{(n)})\in C^*({\mathcal  B}_1, \ldots,
{\mathcal  B}_n, z_n)$.

 Suppose
$\alpha=(i_1, s_1)\times(j_1, t_1)\times\cdots \times(i_{n-1},
s_{n-1})\times (j_{n-1}, t_{n-1})\in \Delta_{n-1}$. Again because
${\mathcal B}_1, \ldots, {\mathcal  B}_{n-1}$ are commuting, for any
$1\leq j\leq n$, from the equation (4) it follows that
\begin{eqnarray*} &&e_{i_{n-1},i_{ {n-1}} }^{(n-1, s_{n-1})} \cdots
e_{i_1,i_{1} }^{(1,s_1)} \cdot  y_j^{(n)} \cdot e_{j_{ 1}
,j_1}^{(1,t_1)} \cdots
e_{ {j_{n-1}} ,j_{n-1}}^{(n-1, t_{n-1})}\\
&=& e_{i_{n-1},k_{s_{n-1}}^{(n-1)}}^{(n-1, s_{n-1})}\cdots
e_{i_1,k_{s_1}^{(1)}}^{(1,s_1)}\cdot \alpha(y_j^{(n)})\cdot
e_{k_{t_1}^{(1)},j_1}^{(1,t_1)} \cdots
e_{k_{t_{n-1}}^{(n-1)},j_{n-1}}^{(n-1, t_{n-1})}\end{eqnarray*} and
$$\begin{aligned} y_j^{(n)}&=
\sum_{s_1,t_1=1}^{r_1}\sum_{i_1=1}^{k_{s_1}^{(1)}}\sum_{j_1=1}^{k_{t_1}^{(1)}}\cdots
\sum_{s_{n-1},t_{n-1}=1}^{r_{n-1}}\sum_{i_{n-1}=1}^{k_{s_{n-1}}^{(n-1)}}\sum_{j_{n-1}=1}^{k_{t_{n-1}}^{(n-1)}}\\
 &\qquad \quad \qquad \qquad \qquad \qquad e_{i_{n-1},i_{ {n-1}} }^{(n-1, s_{n-1})} \cdots
e_{i_1,i_{1} }^{(1,s_1)} \cdot  y_j^{(n)} \cdot
e_{j_{ 1} ,j_1}^{(1,t_1)} \cdots
e_{ {j_{n-1}} ,j_{n-1}}^{(n-1, t_{n-1})}.\end{aligned}$$
Thus $y_j^{(n)}\in C^*({\mathcal  B}_1, \ldots, {\mathcal  B}_n, z_n)$. Hence
$$dist(x_j, C^*({\mathcal  B}_1, \ldots, {\mathcal  B}_{n}, z_n))\le dist(x_j,y_j^{(n)})< 2^{-n}\ \ \  for
 \ \ 1\leq j\leq n. $$ {\it This completes the proof of the
claim.}\vspace{0.8cm}

Let $\{x_n\}_{n=1}^\infty$, $\{{\mathcal  B}_n\}_{n=1}^{\infty}$,
 $\{r_n\}_{n=1}^{\infty}$, $\{p_n\}_{n=1}^{\infty}$ and $\{z_n\}_{n=1}^\infty$ be  as
above. From the equation (2), the fact (i) of Claim 3.2 and the
construction of $z_n$, we can get some basic facts of $z_n$. Let us
list them below:

(F1)\ $p_nz_n=z_np_n=0,$

 (F2)\ $ z_n\cdot e_{11}^{(m,s)}=e_{11}^{(m,s)}\cdot
z_n=0 \ \ \ for\ m\leq n \ \ and \ 1\leq s\leq r_m,$

(F3)\ $ z_n\cdot z_m=0 \ \ \ for \ \ any \ \ n\neq m.$

Let $p_0=I_{\mathcal  A}$ and $r_0=0$. For any $n\geq 1$, let
\begin{equation}a_n=p_1\cdots p_{n-1}\sum_{s=1}^{r_n}2^{-r_1-\cdots
-r_{n-1}-s}\cdot e_{11}^{(n,s)}+z_n,\end{equation}
\begin{equation}b_n=2^{-2n}p_1\cdots p_{n-1}\sum_{s=1}^{r_n}\sum_{i=1}^{k_s^{(n)}-1}
(e_{i,i+1}^{(n,s)}+e_{i+1,i}^{(n,s)}).\end{equation}From the proven
facts (F1), (F2) and (F3), we have
\begin{equation} a_n\cdot a_m=0. \ \ \ \ for \ \ n \ne m\end{equation}
Combining the fact (F2), the equation (6) and the fact
$e_{11}^{(n,s)}\cdot e_{11}^{(n,s_1)}= e_{11}^{(n,s_1)}\cdot
e_{11}^{(n,s)}=0$ ($s\neq s_1$), it is clear that
\begin{equation}\|a_n\|=\max\{\{\|p_1\cdots p_{n-1}\cdot 2^{-r_1-\cdots
-r_{n-1}-s}\cdot
e_{11}^{(n,s)}\|\}_{s=1}^{r_n},\|z_n\|\}=2^{-r_1-\cdots
-r_{n}-1}\leq 2^{-n}.\end{equation}By the equation (8), we get
\begin{equation}\|b_{n}\|\leq 2\cdot
2^{-2n}\cdot\|\sum_{s=1}^{r_n}\sum_{i=1}^{k_s^{(n)}-1}e_{i,i+1}^{n,s}\|\leq
2^{-2n+1}\leq 2^{-n}.
\end{equation}
 It induces that both
$\sum_{n=1}^{\infty}a_n$ and $\sum_{n=1}^{\infty}b_n$ are all
convergent series in $\mathcal A$. Let \begin{equation}
a=\sum_{n=1}^{\infty}a_n, \hspace{1.5cm}
b=\sum_{n=1}^{\infty}b_n.\end{equation} It is clear that
$a=a^*\in\mathcal  A$ and $b=b^*\in\mathcal  A$.

\begin{claim}\label{claim, algebra is in it}Let $\{{\mathcal  B}_n\}_{n=1}^{\infty}$,
$\{z_n\}_{n=1}^{\infty}$ and $a, b$ be defined as above. Then
$$\{{\mathcal  B}_1, z_1, {\mathcal  B}_2, z_2,\ldots\}\subseteq C^*(a,b),$$ where C$^*(a,b)$ is the C$^*$-subalgebra
generated by $a$ and $b$ in $\mathcal A$.
\end{claim}

{\it Proof of the claim.} It is sufficient to prove that, for any
$n\geq 1$, $$\{{\mathcal  B}_1, \ldots, {\mathcal  B}_n, z_1, \ldots,
z_n\}\subseteq C^*(a,b).$$ We will prove it by using the induction on $n$.

First we prove $\{{\mathcal  B}_1, z_1\}\subseteq C^*(a,b)$. From equations (7), (9), (12) and part (i)
of Claim 3.2, it follows that \begin{eqnarray*}\forall \ k\in\Bbb N, \ \ (2a)^k&=&(2a_1)^k+\sum_{n=2}^\infty(2a_n)^k\\
&=& e_{11}^{(1,1)}+
\sum_{s_1=2}^{r_1}(2^{-s_1+1})^ke_{11}^{(1,s_1)}+(2z_1)^k+\sum_{n=2}^\infty(2a_n)^k.
\end{eqnarray*}
By inequalities (6), (9) and (10), we obtain that
$$
\|\sum_{s_1=2}^{r_1}(2^{-s_1+1})^ke_{11}^{(1,s_1)}+(2z_1)^k+\sum_{n=2}^\infty(2a_n)^k\|\rightarrow 0, \ \ \ as \ k\rightarrow \infty,
$$which implies
$\|(2a)^k- e_{11}^{(1,1)}\|\rightarrow 0$, as $k$ goes to $\infty$. Thus, $e_{11}^{(1,1)}\in C^*(a,b)$.
By the construction of the element $b$, it is not hard to check  $\{e_{ij}^{(1,1)}: 1\leq
i,j\leq k_1^{(1)}\}$ are contained in the C$^*$-subalgebra generated by
$e_{11}^{(1,1)}$ and $b$ in $\mathcal A$. Therefore, $\{e_{ij}^{(1,1)}: 1\leq
i,j\leq k_1^{(1)}\} \subseteq C^*(a,b)$.

Since
$$(I_{\mathcal  A}-e_{11}^{(1,1)})\cdot a\cdot (I_{\mathcal  A}-e_{11}^{(1,1)})=
\sum_{s=2}^{r_1}2^{-s}e_{11}^{(1,s)}+z_1 +\sum_{n=2}^\infty  a_n ,$$ by equation (6) and inequality (10)  we know
$$\left \|\left(4(I_{\mathcal  A}-e_{11}^{(1,1)})\cdot a\cdot (I_{\mathcal  A}-e_{11}^{(1,1)})\right)^k-
e_{11}^{(1,2)}\right \|\rightarrow 0, \ \ as \ k \rightarrow \infty.$$
This implies $e_{11}^{(1,2)}$ is in $C^*(a,b)$, whence   $\{e_{ij}^{(1,2)}:1\leq i,j\leq
k_2^{(1)}\}$  are in $C^*(a,b)$. Repeating the preceding process, we get that
 $\{e_{ij}^{(1,s)}: 1\leq i,j\leq k_s^{(1)}, 1\leq s\leq r_1\}$ and, therefore $p_1$, are contained in $C^*(a,b)$.
 By fact (i) of the Claim 3.2, we know $$(I_{\mathcal A}-p_1)z_1=z_1;$$ and, from equation (7) we
 know $$(I_{\mathcal A}-p_1)\sum_{n=2}^\infty a_n =0.$$ This induces that $z_1$ is also contained in  $C^*(a,b)$. Now we conclude that
both  ${\mathcal  B}_1$ and $z_1$ are contained in $C^*(a,b)$.

Assume that $\{{\mathcal  B}_1,\ldots, {\mathcal
B}_{n-1},z_1,\ldots, z_{n-1}\}\subseteq C^*(a,b)$. We need to prove
that $\{{\mathcal B}_n, z_n\}\subseteq C^*(a,b)$. By the equation
(2) and the construction of the elements $a$, $b$ (see equations
(7), (8), (12)), we know that
$$\left(p_1\cdots p_{n-1}\right)a=\sum_{i=n}^{\infty}a_i=p_1\cdots p_{n-1}\sum_{s=1}^{r_n}2^{-r_1-\cdots
-r_{n-1}-s}\cdot e_{11}^{(n,s)}+z_n +\sum_{i=n+1}^{\infty}a_i,
$$ and
$$\left(p_1\cdots p_{n-1}\right)b\left(p_1\cdots
p_{n-1}\right) =\sum_{i=n}^{\infty}b_i=\left (2^{-2n}p_1\cdots p_{n-1}\sum_{s=1}^{r_n}\sum_{i=1}^{k_s^{(n)}-1}
(e_{i,i+1}^{(n,s)}+e_{i+1,i}^{(n,s)})\right)+\sum_{i=n+1}^{\infty}b_i.
$$ By   equations (7), (9) and part (i) of Claim 3.2, we obtain that
$$\left \|\left(2^{r_1+\cdots +r_{n-1}+1}\right) \left (\sum_{i=n}^{\infty}a_i\right)^k-
\left(p_1\cdots p_{n-1}\right)e_{11}^{(n,1)} \right\|\rightarrow 0,
\ \ as \ \ k \rightarrow\infty.$$ Using the similar argument as in
the case $n=1$, we know that  $\{p_1\cdots
p_{n-1}e_{ij}^{(n,s)}:1\leq i,j\leq k_s^{(n)}, 1\leq s\leq r_n\}$
are contained in the C$^*$-subalgebra generated by
$\sum_{i=n}^{\infty}a_i$ and $\sum_{i=n}^{\infty}b_i$ in $\mathcal
A$. From the fact that ${\mathcal  B}_1, \ldots, {\mathcal  B}_n$
are mutually commuting subalgebras, it follows that
\begin{eqnarray*}e_{ij}^{(n,s)}&=&\sum_{s_1=1}^{r_1}\sum_{i_1=1}^{k_{s_1}^{1}}\cdots
\sum_{s_{n-1}=1}^{r_{n-1}}\sum_{i_{n-1}=1}^{k_{s_{n-1}}^{(n-1)}}e_{i_{n-1},k_{s_{n-1}}^{(n-1)}}^{(n-1,s_{n-1})}\\
&&\cdots e_{i_1,k_{s_{1}}^{(1)}}^{(1,s_1)}\cdot (p_1\cdots
p_{n-1}e_{ij}^{(n,s)})\cdot e_{k_{s_{1}}^{(1)},i_1}^{(1,s_1)}\cdots
e_{k_{s_{n-1}}^{(n-1)}, i_{n-1}}^{(n-1,s_{n-1})}
\end{eqnarray*}  is in $C^*(a,b)$, which implies that  $\{ e_{ij}^{(n,s)}:1\leq i,j\leq k_s^{(n)}, 1\leq s\leq r_n\}$,
therefore   ${\mathcal  B}_n$, $p_n$ and $z_n$,  are contained in
$C^*(a,b)$. {\it This completes the proof of the claim.}
\vspace{0.3cm}

  By Claim \ref{claim, elements is in it} and Claim \ref{claim, algebra is in it}, $\mathcal
A$ is generated by two self-adjoint elements $a$ and $b$. Therefore
$\mathcal  A$ is singly generated. \hfill
\end{proof}

Suppose $\mathcal A$ is a unital separable C$^*$-algebra and
$\mathcal B$ is a UHF algebra. It is clear that ${\mathcal A}\otimes
{\mathcal B}$ is approximately divisible. Therefore Theorem 9 in
\cite{olsen} is a corollary of our theorem.

\begin{corollary}If $\mathcal A$ is a unital separable C$^*$-algebra and
$\mathcal B$ is a UHF algebra, then ${\mathcal A}\otimes {\mathcal
B}$ is singly generated.
\end{corollary}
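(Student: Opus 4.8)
The plan is to show that $\mathcal{A}\otimes\mathcal{B}$ satisfies the hypotheses of Theorem \ref{theorem, singly generated}, so that the conclusion follows at once; thus the only real content is to verify that $\mathcal{A}\otimes\mathcal{B}$ is a unital separable approximately divisible C$^*$-algebra. Unitality and separability are immediate: both tensor factors are unital and separable, so $I_{\mathcal A}\otimes I_{\mathcal B}$ is a unit, and the countable family of finite sums $\sum a_i\otimes b_i$ with $a_i,b_i$ ranging over countable dense subsets of $\mathcal A$ and $\mathcal B$ is dense in $\mathcal{A}\otimes\mathcal{B}$. Hence essentially all of the work goes into establishing \emph{approximate divisibility}.

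The key step is to exploit the tensor-product structure of the UHF algebra. It is standard that every UHF algebra can be written as an infinite tensor product $\mathcal{B}\cong\bigotimes_{j=1}^{\infty} {\mathcal M}_{m_j}(\Bbb C)$ with each $m_j\geq 2$; equivalently, $\mathcal{B}$ is the closure of the increasing union of the unital matrix subalgebras ${\mathcal M}_{m_1}(\Bbb C)\otimes\cdots\otimes {\mathcal M}_{m_j}(\Bbb C)$. Given $x_1,\ldots,x_n\in\mathcal{A}\otimes\mathcal{B}$ and $\varepsilon>0$, I would first approximate, to within $\varepsilon/3$, each $x_i$ by an element $x_i'$ of the algebraic tensor product lying in $\mathcal{A}\otimes({\mathcal M}_{m_1}(\Bbb C)\otimes\cdots\otimes {\mathcal M}_{m_j}(\Bbb C))$ for a single large $j$; this is possible because the union of these subalgebras is dense in $\mathcal{A}\otimes\mathcal{B}$.

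For the finite-dimensional subalgebra I would take $\mathcal{C}=I_{\mathcal{A}}\otimes I_{{\mathcal M}_{m_1}(\Bbb C)\otimes\cdots\otimes {\mathcal M}_{m_j}(\Bbb C)}\otimes {\mathcal M}_{m_{j+1}}(\Bbb C)$, the copy of ${\mathcal M}_{m_{j+1}}(\Bbb C)$ sitting on the $(j+1)$-st tensor factor of $\mathcal{B}$. Then $I_{\mathcal A}\otimes I_{\mathcal B}\in\mathcal{C}$, and since $\mathcal{C}$ is a single full matrix algebra, $\mbox{SubRank}(\mathcal{C})=m_{j+1}\geq 2$, which gives conditions (1) and (2) of the definition of approximate divisibility. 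For condition (3), note that $\mathcal{C}$ occupies a tensor factor disjoint from the one supporting each $x_i'$, so $\mathcal{C}$ commutes exactly with $\mathcal{A}\otimes({\mathcal M}_{m_1}(\Bbb C)\otimes\cdots\otimes {\mathcal M}_{m_j}(\Bbb C))$, and in particular $x_i'y=yx_i'$ for every $y\in\mathcal{C}$. A three-term estimate then yields, for all $y\in\mathcal C$ with $\|y\|\leq 1$, the bound $\|x_iy-yx_i\|\leq 2\|x_i-x_i'\|\,\|y\|<\varepsilon$, since the middle term vanishes by commutativity. This verifies approximate divisibility.

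Finally, I would invoke Theorem \ref{theorem, singly generated}: since $\mathcal{A}\otimes\mathcal{B}$ is unital, separable, and approximately divisible, it is singly generated. I do not expect a genuine obstacle here, as the corollary is essentially a repackaging of the main theorem; the only point requiring any care is the routine bookkeeping of the tensor-factor decomposition and arranging that the chosen later factor ${\mathcal M}_{m_{j+1}}(\Bbb C)$ has dimension at least $2$. The latter is automatic, because a UHF algebra is infinite-dimensional and therefore has infinitely many nontrivial matrix factors, so a suitable $j$ with $m_{j+1}\geq 2$ always exists.
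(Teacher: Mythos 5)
Your proposal is correct and follows exactly the paper's route: the paper likewise deduces the corollary by observing that ${\mathcal A}\otimes {\mathcal B}$ is approximately divisible (stated there as ``clear'') and then invoking Theorem \ref{theorem, singly generated}. Your contribution is simply to fill in the standard verification of approximate divisibility via the tensor-factor structure of the UHF algebra, and that verification is sound.
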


\section{Topological free entropy dimension}

In this section we show that the topological free entropy dimension
of any finite generating set of a  unital separable approximately
divisible C$^*$-algebra is less than or equal to $1$.

\subsection{Preliminaries}
\hspace{1.5em}
 We are going to recall Voiculescu's definition of
topological free entropy dimension of an $n$-tuple of self-adjoint
elements in a unital C$^*$-algebra.

For any element $(A_1,\ldots,A_n)$ in ${\mathcal  M}_k({\Bbb C})^n$,
define the operator norm on ${\mathcal  M}_k({\Bbb C})^n$ by
$$\|(A_1,\ldots,A_n)\|=\max\{\|A_1\|,\ldots,\|A_n\|\}.$$

For every $\omega>0$, we define the $\omega$-$\|\cdot\|$-ball
$Ball(B_1,\ldots,B_n;\omega, \|\cdot\|)$ centered at
$(B_1,\ldots,B_n)$ in ${\mathcal  M}_k({\Bbb C})^n$ to be the subset of
${\mathcal  M}_k({\Bbb C})^n$ consisting of all $(A_1,\ldots, A_n)$ in
${\mathcal  M}_k({\Bbb C})^n$ such that
$$\|(A_1,\ldots,A_n)-(B_1,\ldots,B_n)\|< \omega.$$

Suppose $\mathcal  F$ is a subset of ${\mathcal  M}_k({\Bbb C})^n$. We define
the {\it covering number} $\nu_{\infty}({\mathcal  F},\omega)$ to be the
minimal number of $\omega$-$\|\cdot\|$-balls whose union covers $\mathcal  F$ in ${\mathcal  M}_k({\Bbb C})^n$.

 Define ${\Bbb C}\langle
X_1,\ldots, X_n\rangle$ to be the unital noncommutative polynomials
in the indeterminates $X_1,\ldots, X_n$. Let
$\{p_m\}_{m=1}^{\infty}$ be the collection of all noncommutative
polynomials in ${\Bbb C}\langle X_1,\ldots, X_n\rangle$ with
rational complex coefficients. (Here ``rational complex
coefficients'' means that the real and imaginary parts of all
coefficients of $p_m$ are rational numbers).

Suppose $\mathcal{A}$ is a unital C$^{\ast}$-algebra, $x_{1},\ldots
,x_{n},y_{1},\ldots,y_{t}$ are self-adjoint elements of
$\mathcal{A}$. For any $\omega, \varepsilon>0$, positive integers
$k$ and $m$, define
\begin{eqnarray*}
&& \Gamma_{top}(x_{1},\ldots,x_{n};k,\varepsilon, m)=\{(A_{1},\ldots
,A_{n})\in\left({\mathcal{M}}_{k}^{s.a}({\Bbb C})\right)^{n}:\\
&& \qquad \qquad \qquad \qquad \qquad |\Vert p_j(A_{1},\ldots,A_{n})\Vert-\Vert
p_j(x_{1},\ldots,x_{n})\Vert |<\varepsilon, \forall 1\leq j\leq m\},
\end{eqnarray*}
and define
$$\nu_{\infty}(\Gamma_{top}(x_{1},\ldots,x_{n};k,\varepsilon,
m),\omega)$$ to be the covering number of the set
$\Gamma_{top}(x_{1},\ldots,x_{n};k,\varepsilon, m
)$ by $\omega$-$\|\cdot\|$-balls in the metric space $({\mathcal{M}}_{k}^{s.a}%
(\mathbb{C))}^{n}$ equipped with operator norm.

Define
$$\delta_{top}(x_{1},\ldots,x_{n};\omega) =\inf_{\varepsilon>0,m\in\mathbb{N}}\limsup_{k\rightarrow\infty}
\frac{\log(\nu_{\infty}(\Gamma_{top}(x_{1},\ldots,x_{n};k,\varepsilon,
m),\omega))}{-k^{2}\log\omega},$$ and $$
\delta_{top}(x_{1},\ldots,x_{n}) =\limsup_{\omega\rightarrow0^{+}}\delta_{top}(x_{1},\ldots,x_{n};\omega).$$

 Define $\Gamma_{top}(x_{1},\ldots,x_{n}:y_{1,}\ldots
,y_{t};k,\varepsilon, m)$ to be the set of $(A_{1},\ldots,A_{n})\in
\left({\mathcal{M}}_{k}^{s.a}({\Bbb C})\right)^{n}$ such that there
is $(B_{1},\ldots,B_{t})\in \left({\mathcal{M}}_{k}^{s.a}({\Bbb
C})\right)^{t}$ satisfying
$$(A_1,\ldots,A_n,B_1,\ldots,B_t)\in
\Gamma_{top}(x_{1},\ldots,x_{n},y_{1,}\ldots ,y_{t};k,\varepsilon,
m).$$ Then similarly we can define
\begin{eqnarray*}  \delta_{top}(x_{1},\ldots,x_{n}  :y_1,\ldots,y_t;\omega)   = \inf_{\varepsilon>0,m\in\mathbb{N}}\limsup_{k\rightarrow\infty}
\frac{\log(\nu_{\infty}(\Gamma_{top}(x_{1},\ldots,x_{n}:y_1,\ldots,y_t;k,\varepsilon,
m),\omega))}{-k^{2}\log\omega};
\end{eqnarray*} and
$$\delta_{top}(x_{1},\ldots,x_{n}:y_1,\ldots,y_t)
=\limsup_{\omega\rightarrow
0^{+}}\delta_{top}(x_{1},\ldots,x_{n}:y_1,\ldots,y_t;\omega).$$

\begin{lemma}\label{lemma, x:y=x:y,z}Suppose $\mathcal  A$ is a unital C$^*$-algebra, $x_1,\ldots,x_n$, $y_1, \ldots, y_t$ are self-adjoint elements in
$\mathcal  A$ and $x_1,\ldots,x_n$ generate $\mathcal  A$. Suppose $p\in\{p_m\}_{m=1}^{\infty}$ and $\omega>0$. Then the
following are true:

(1)\
$\delta_{top}(x_1,\ldots,x_n;\omega)=\delta_{top}(x_1,\ldots,x_n:y_1,\ldots,y_t;\omega)$,

(2)\ $\delta_{top}(p(x_1,\ldots, x_n):
x_1,\ldots,x_n;\omega)=\delta_{top}(p(x_1,\ldots, x_n):
x_1,\ldots,x_n, y_1,\ldots,y_t;\omega)$,

 (3)\ $\delta_{top}(x_1,\ldots,x_n)\geq\delta_{top}(p(x_1,\ldots, x_n):
x_1,\ldots,x_n).$

\end{lemma}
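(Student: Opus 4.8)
The plan is to prove the three statements of Lemma 4.1 by unwinding the definitions of the microstate spaces and their covering numbers, exploiting the fact that $x_1,\ldots,x_n$ generate $\mathcal{A}$. Throughout, the central mechanism will be that once $(A_1,\ldots,A_n)$ approximately satisfies the norm-relations of $x_1,\ldots,x_n$ on many polynomials, the matrices $p_j(A_1,\ldots,A_n)$ approximate the values of any other self-adjoint element $y_i$ (which, being in $\mathcal{A}=C^*(x_1,\ldots,x_n)$, is a norm-limit of polynomials in the $x$'s). This lets me produce the companion matrices $(B_1,\ldots,B_t)$ needed to realize a microstate in the joint space, at the cost of slightly enlarging the tolerance parameters $\varepsilon$ and $m$.

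For part (1), I would argue both inequalities. The inequality $\delta_{top}(x_1,\ldots,x_n:y_1,\ldots,y_t;\omega)\le\delta_{top}(x_1,\ldots,x_n;\omega)$ is immediate, since the projection map $(A_1,\ldots,A_n,B_1,\ldots,B_t)\mapsto(A_1,\ldots,A_n)$ sends $\Gamma_{top}(x_1,\ldots,x_n,y_1,\ldots,y_t;k,\varepsilon,m)$ into $\Gamma_{top}(x_1,\ldots,x_n;k,\varepsilon,m)$ and cannot increase the covering number (covering $\omega$-balls in the larger space restrict to $\omega$-balls in the coordinate subspace). For the reverse inequality, given $(A_1,\ldots,A_n)$ in $\Gamma_{top}(x_1,\ldots,x_n;k,\varepsilon',m')$ for suitably fine $\varepsilon',m'$, I would define $B_i=q_i(A_1,\ldots,A_n)$ where $q_i$ is a fixed self-adjoint polynomial with $\|q_i(x_1,\ldots,x_n)-y_i\|$ small; the approximate norm-relations force $(A_1,\ldots,A_n,B_1,\ldots,B_t)$ to land in the joint microstate space, and since each $B_i$ is a Lipschitz-continuous (on the relevant norm-bounded region) function of the $A$'s, a covering of the $A$-space by $\omega$-balls induces a covering of the graph by balls of comparable radius, giving matching covering-number asymptotics. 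Part (2) follows by exactly the same two-sided argument applied to the tuple whose ``observed'' coordinate is the single element $p(x_1,\ldots,x_n)$, with $x_1,\ldots,x_n$ (and then also $y_1,\ldots,y_t$) playing the auxiliary role; the generating hypothesis again supplies the polynomial expressions for the $y_i$.

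For part (3), the idea is that computing the free entropy dimension of $p(x_1,\ldots,x_n)$ relative to $x_1,\ldots,x_n$ amounts to covering the set of matrices $p(A_1,\ldots,A_n)$ as $(A_1,\ldots,A_n)$ ranges over the microstate space of the $x$'s. Since $p$ is a fixed polynomial and is Lipschitz on norm-bounded sets, any $\omega$-ball covering of $\Gamma_{top}(x_1,\ldots,x_n;k,\varepsilon,m)$ pushes forward under $p$ to a covering of $\Gamma_{top}(p(x_1,\ldots,x_n):x_1,\ldots,x_n;k,\varepsilon,m)$ by balls of radius at most $C\omega$ for some constant $C$ depending only on $p$ and the norm bound; replacing $\omega$ by $C\omega$ changes $-k^2\log\omega$ only by lower-order additive terms, so after taking $\limsup_k$, $\inf_{\varepsilon,m}$, and $\limsup_{\omega\to 0^+}$ the factor $C$ washes out and one obtains $\delta_{top}(p(x_1,\ldots,x_n):x_1,\ldots,x_n)\le\delta_{top}(x_1,\ldots,x_n)$.

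The main obstacle I anticipate is bookkeeping the quantifiers cleanly: the polynomial approximations $q_i$ and the Lipschitz constants are chosen after fixing a desired output tolerance, so I must be careful about the order in which $\varepsilon$, $m$, the polynomial degree, and the uniform norm bound on the microstates are selected, ensuring that the choices made to realize the $B_i$ do not secretly depend on $k$. Concretely, I would first pin down a uniform a priori bound on $\|A_i\|$ valid on all the microstate spaces (using that $\|p_j(A)\|$ is close to $\|p_j(x)\|$ for the polynomials $p_j(X)=X_i$), then fix the approximating polynomials and their Lipschitz constants relative to that bound, and only afterward let $k\to\infty$. Handling the effect of the multiplicative constant $C$ on the normalization $-k^2\log\omega$ in the $\omega\to 0^+$ limit is routine but deserves a careful line, and this is where I expect the bulk of the technical care to lie.
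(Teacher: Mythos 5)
Your proposal is correct and coincides with the argument the paper intends: the paper itself offers no details here, deferring (1)--(2) to a ``straightforward adaptation'' of Proposition 1.6 in \cite{Vo2} and (3) to \cite{Don-shen 2}, and those arguments are exactly your two mechanisms --- realizing witnesses $B_i=q_i(A_1,\ldots,A_n)$ from self-adjoint rational-coefficient polynomial approximations of the $y_i$ (available because $x_1,\ldots,x_n$ generate $\mathcal A$), and pushing $\omega$-covers forward under the Lipschitz map $A\mapsto p(A)$, with the multiplicative constant absorbed by the $-k^2\log\omega$ normalization as $\omega\to 0^+$. One cosmetic remark: since $\Gamma_{top}(x_1,\ldots,x_n:y_1,\ldots,y_t;k,\varepsilon,m)$ is by definition a subset of $({\mathcal M}_k^{s.a}({\Bbb C}))^n$ (only the $A$-coordinates), the set inclusions you establish already yield the covering-number comparisons directly, so the ``covering of the graph'' step in part (1) is superfluous, though harmless.
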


Proof. The proof of (1) and (2) are straightforward adaptations of
the proof of Proposition 1.6 in \cite{Vo2}. (3) is proved by D.
Hadwin and J. Shen in \cite{Don-shen 2}. \hfill $\Box$

\vspace{0.2cm}

The following lemma is Lemma 2.3 in \cite{b}, and it will be used in
the proofs of Theorem \ref{theorem, upper bound} and Theorem
\ref{theorem, lower bound}.
\begin{lemma}\label{lemma,almost matrix unit is matrix unit}Let ${\mathcal  B}$ be a
finite-dimensional C$^*$-algebra, which is isomorphic to ${\mathcal
M}_{k_1}(\Bbb C)\oplus \cdots \oplus {\mathcal  M}_{k_r}(\Bbb C)$.
For any $\varepsilon>0$, there is a $\delta>0$ such that, whenever
$\mathcal A$ is a unital separable C$^*$-algebra with the unit
$I_{\mathcal  A}$ and $\{a_{ij}^{(s)}:1\leq i,j\leq k_s, 1\leq s\leq
r\}$ in $\mathcal  A$ satisfying

(1)\ \ $\|(a_{ij}^{(s)})^*-a_{ji}^{(s)}\|\leq \delta$ for all
$i,j,s$,

 (2)\ \ $\|\sum_{s=1}^r\sum_{i=1}^{k_s}a_{ii}^{(s)}-I_{\mathcal  A}\|\leq \delta$,

 (3)\ \ $\|a_{ij}^{(s)}a_{jj_1}^{(s)}-a_{ij_1}^{(s)}\|\leq \delta$ for all $i,j,j_1,s$,
 $\|a_{ij}^{(s)}a_{i_1j_1}^{(s_1)}\|\leq\delta$ if $s\neq s_1$ or $j\neq
 i_1$,

 then there is a set $\{e_{ij}^{(s)}:1\leq i,j\leq k_s, 1\leq s\leq r\}$ of matrix units for a copy
of $\mathcal  B$ in $\mathcal  A$ satisfying
$\|a_{ij}^{(s)}-e_{ij}^{(s)}\|<\varepsilon$ for all $i,j,s.$
\end{lemma}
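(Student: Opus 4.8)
The plan is to prove this as a stability (perturbation) statement for the relations defining a system of matrix units: from an approximate system $\{a_{ij}^{(s)}\}$ I will manufacture an exact system $\{e_{ij}^{(s)}\}$ nearby, using functional calculus to correct the diagonal idempotents into projections, polar decomposition to correct the off-diagonal elements into partial isometries, and then reconstructing the full system from the first column. The final $\delta$ will be extracted from a finite chain of continuity estimates, so it will depend on $\varepsilon$ and on the numbers $k_1,\ldots,k_r$ (the length of the chain) but not on $\mathcal A$.

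First I would correct the diagonals. For fixed $s,i$, condition (1) gives that $h_i^{(s)}=\frac{1}{2}\big(a_{ii}^{(s)}+(a_{ii}^{(s)})^{*}\big)$ is self-adjoint and within $O(\delta)$ of $a_{ii}^{(s)}$, while condition (3) with $i=j=j_1$ gives $\|(a_{ii}^{(s)})^{2}-a_{ii}^{(s)}\|=O(\delta)$, hence $\|(h_i^{(s)})^{2}-h_i^{(s)}\|=O(\delta)$. So the spectrum of $h_i^{(s)}$ lies within $O(\delta)$ of $\{0,1\}$, and applying a fixed continuous function that is $0$ near $0$ and $1$ near $1$ produces a genuine projection $q_i^{(s)}$ with $\|q_i^{(s)}-a_{ii}^{(s)}\|=O(\delta)$. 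By the second half of condition (3) these projections are approximately pairwise orthogonal, so I orthogonalise them inductively: having produced mutually orthogonal projections from the ones chosen so far, I cut the next $q$ down by the complement of their sum, which is again an approximate projection now exactly orthogonal to those already chosen, and re-correct it by functional calculus. This yields exact, mutually orthogonal projections $e_{ii}^{(s)}$ with $\|e_{ii}^{(s)}-a_{ii}^{(s)}\|$ small; by condition (2), $\sum_{s,i}e_{ii}^{(s)}$ is then a projection within distance $<1$ of $I_{\mathcal A}$, hence equal to $I_{\mathcal A}$.

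Next I would build the off-diagonal units from the first column. Set $v_i^{(s)}=e_{ii}^{(s)}\,a_{i1}^{(s)}\,e_{11}^{(s)}\in e_{ii}^{(s)}\mathcal A e_{11}^{(s)}$. Using conditions (1) and (3) together with $e_{ii}^{(s)}\approx a_{ii}^{(s)}$ one checks
$$(v_i^{(s)})^{*}v_i^{(s)}\approx e_{11}^{(s)},\qquad v_i^{(s)}(v_i^{(s)})^{*}\approx e_{ii}^{(s)},$$
so $(v_i^{(s)})^{*}v_i^{(s)}$ is invertible in the corner $e_{11}^{(s)}\mathcal A e_{11}^{(s)}$, and the polar part $e_{i1}^{(s)}=v_i^{(s)}\big((v_i^{(s)})^{*}v_i^{(s)}\big)^{-1/2}$ is a genuine partial isometry with $(e_{i1}^{(s)})^{*}e_{i1}^{(s)}=e_{11}^{(s)}$. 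Since $v_i^{(s)}$, and hence $e_{i1}^{(s)}$, lies in $e_{ii}^{(s)}\mathcal A e_{11}^{(s)}$, its range projection satisfies $e_{i1}^{(s)}(e_{i1}^{(s)})^{*}\le e_{ii}^{(s)}$, and being also within $<1$ of $e_{ii}^{(s)}$ it equals $e_{ii}^{(s)}$. Declaring $e_{1i}^{(s)}=(e_{i1}^{(s)})^{*}$ and $e_{ij}^{(s)}=e_{i1}^{(s)}e_{1j}^{(s)}$, I obtain an exact system of matrix units for a copy of $\mathcal B$: the relations $(e_{ij}^{(s)})^{*}=e_{ji}^{(s)}$, together with $e_{ij}^{(s)}e_{lm}^{(s)}=e_{im}^{(s)}$ when $j=l$ and $e_{ij}^{(s)}e_{lm}^{(s')}=0$ when $s\neq s'$ or $j\neq l$, hold exactly because every element lives in the corners cut out by the mutually orthogonal projections $\{e_{ii}^{(s)}\}$, whose sum is $I_{\mathcal A}$.

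The main obstacle is purely the error bookkeeping: each correction (functional calculus on the diagonals, the inductive orthogonalisation, and the polar decomposition) introduces a displacement that is a fixed continuous function of the input defect and vanishes as that defect tends to $0$, but the orthogonalisation runs through $k_1+\cdots+k_r$ stages and the estimate for $v_i^{(s)}$ compounds several diagonal errors, so one must check that the composed modulus of continuity still tends to $0$ and then choose $\delta=\delta(\varepsilon,k_1,\ldots,k_r)$ making the total displacement $\|a_{ij}^{(s)}-e_{ij}^{(s)}\|$ below $\varepsilon$. A clean way to package these estimates, avoiding explicit constants, is a compactness argument: were the lemma false for some $\varepsilon_0$, there would be algebras $\mathcal A_n$ and $\delta_n\to 0$ carrying approximate systems admitting no exact system within $\varepsilon_0$; in the quotient $\prod_n\mathcal A_n\big/\bigoplus_n\mathcal A_n$ the images of the $a_{ij}^{(s)}$ satisfy the matrix-unit relations exactly, and lifting this finite system of matrix units back to $\prod_n\mathcal A_n$ (these relations being stable under such sequential quotients, i.e. finite-dimensional C$^*$-algebras being semiprojective) produces exact systems eventually within $\varepsilon_0$ of the given ones, a contradiction.
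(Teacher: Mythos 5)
You should first know that the paper contains no proof of this lemma at all: it is quoted as Lemma 2.3 of Blackadar--Kumjian--R\o rdam \cite{b}, so your attempt can only be compared with the standard argument in the literature. Measured that way, your constructive route is essentially that standard argument, and it is sound: functional calculus turns the approximate diagonal idempotents into projections (one small point you gloss over is that the estimate $\|h^2-h\|=O(\delta)$ needs an a priori bound on $\|h\|$, which is not circular precisely because $h$ is self-adjoint, so $\|h\|^2=\|h^2\|\le\|h\|+\|h^2-h\|$ forces $\|h\|\le 1+O(\delta)$); the inductive orthogonalization works because the functional-calculus correction of $(1-P)q(1-P)$ stays inside the hereditary corner $(1-P)\mathcal{A}(1-P)$; and the polar-decomposition construction of the first column together with $e_{ij}^{(s)}=e_{i1}^{(s)}e_{1j}^{(s)}$ gives exact relations. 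Your alternative packaging via $\prod_n\mathcal{A}_n/\bigoplus_n\mathcal{A}_n$ and semiprojectivity of finite-dimensional C$^*$-algebras is the other standard route (Loring's stability framework) and is also correct, provided one notes that conditions (1) and (3) force the sequences $(a_{ij}^{(s)}(n))_n$ to be bounded, so that they actually define elements of the product.

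The one genuine issue is not with your construction but with the conclusion you claim at the end (which is also how the lemma is stated): ``matrix units for a copy of $\mathcal{B}$'' asserts injectivity, and that is strictly more than the hypotheses can yield. Take $\mathcal{B}=\mathcal{M}_2(\mathbb{C})\oplus\mathcal{M}_2(\mathbb{C})$ and $\mathcal{A}=\mathcal{M}_2(\mathbb{C})$, with $a_{ij}^{(1)}$ the standard matrix units of $\mathcal{A}$ and $a_{ij}^{(2)}=0$: conditions (1)--(3) hold with $\delta=0$, yet $\mathcal{A}$ contains no copy of $\mathcal{B}$ whatsoever. Your own argument reflects this honestly if you trace it through: the projections $q_i^{(s)}$ produced by functional calculus may be $0$, in which case the whole block degenerates (and in the compactness route the limiting homomorphism may kill blocks, so its lifts do too). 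What both of your routes actually prove is the correct statement: a system of elements satisfying the exact matrix-unit relations, with possibly zero blocks --- equivalently a possibly non-injective $*$-homomorphism of $\mathcal{B}$ into $\mathcal{A}$ --- within $\varepsilon$ of the given elements. This weakening is harmless for the paper's applications, because there the approximate matrix units come from $\Gamma_{top}$-microstates, which force each $\|a_{st}^{(\iota)}\|$ to be within $\varepsilon$ of $\|e_{st}^{(\iota)}\|=1$, so no block can vanish and injectivity is automatic; but as a free-standing lemma, either the conclusion must be weakened as above or a nondegeneracy hypothesis (say $\|a_{11}^{(s)}\|\ge 1-\delta$ for each $s$) must be added.
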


\subsection{Upper bound of topological free entropy dimension in an approximately divisible C$^*$-algebra}
\hspace{1.5em}

 The following lemma is Lemma 6 in \cite{Don}.
\begin{lemma}\label{lemma, covering number}The following statements are true:

(1) Let ${\mathcal  U}_k$ be the group of all unitary matrices in ${\mathcal
M}_k({\Bbb C})$, $\omega>0.$ Then
$$(\frac{1}{\omega})^{k^2}\leq\nu_{\infty}({\mathcal  U}_k,\omega)\leq (\frac{9\pi e}{\omega})^{k^2}.$$

(2) If $d$ is a metric on ${\Bbb R}^m$, $\Bbb B$ is the unit ball of
${\Bbb R}^m$ equipped with the norm induced by $d$, then for
$\omega>0$,
$$(\frac{1}{\omega})^{m}\leq\nu_{d}({\Bbb B}, \omega)\leq
(\frac{3}{\omega})^{m}$$
\end{lemma}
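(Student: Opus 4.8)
The plan is to handle the two parts separately: part (2) is a self-contained volume-comparison estimate, and part (1) is then bootstrapped from it through the exponential parametrization of $\mathcal{U}_k$. For part (2), write $V$ for the Lebesgue volume of the unit ball $\mathbb{B}$. For the lower bound, any covering of $\mathbb{B}$ by $N$ balls of $d$-radius $\omega$ forces $V\le N\omega^m V$, since each such ball has volume $\omega^m V$; hence $N\ge(1/\omega)^m$. For the upper bound I would take a maximal $\omega$-separated subset $\{x_1,\dots,x_N\}$ of $\mathbb{B}$: maximality makes it an $\omega$-net, hence a covering, while $\omega$-separation makes the balls $B(x_i,\omega/2)$ pairwise disjoint and contained in $(1+\omega/2)\mathbb{B}$. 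Comparing volumes gives $N(\omega/2)^m\le(1+\omega/2)^m$, i.e. $N\le(2/\omega+1)^m\le(3/\omega)^m$ for $0<\omega\le1$, which is the regime relevant to the covering-number asymptotics.

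For the upper bound in part (1), I would reduce to part (2) via the exponential map. Let $W$ be the real vector space of $k\times k$ anti-Hermitian matrices, of real dimension $k^2$, equipped with the operator norm, and let $B_\pi\subset W$ be the operator-norm ball of radius $\pi$. Every $U\in\mathcal{U}_k$ has a principal logarithm $U=\exp(X)$ with $X\in W$ and spectrum in $i(-\pi,\pi]$, so $\|X\|\le\pi$; thus $\exp$ maps $B_\pi$ onto $\mathcal{U}_k$. Moreover $\exp$ is $1$-Lipschitz on $W$: from
$$\exp(X)-\exp(Y)=\int_0^1 e^{sX}(X-Y)e^{(1-s)Y}\,ds$$
together with $\|e^{sX}\|=\|e^{(1-s)Y}\|=1$ (exponentials of anti-Hermitian matrices are unitary), we get $\|\exp(X)-\exp(Y)\|\le\|X-Y\|$. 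Hence the $\exp$-image of any $\omega$-cover of $B_\pi$ is an $\omega$-cover of $\mathcal{U}_k$, so $\nu_\infty(\mathcal{U}_k,\omega)\le\nu_\infty(B_\pi,\omega)$; applying part (2) in $W\cong\mathbb{R}^{k^2}$ after rescaling $B_\pi=\pi\mathbb{B}$ bounds this by $(3\pi/\omega)^{k^2}\le(9\pi e/\omega)^{k^2}$.

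For the lower bound in part (1), I would use the normalized Haar measure $\mu$ on $\mathcal{U}_k$. Since the operator norm is unitarily invariant, $B(U,\omega)=U\cdot B(I,\omega)$, so by left-invariance each ball of radius $\omega$ has the same measure $\mu(B(I,\omega))$; if $N$ such balls cover $\mathcal{U}_k$ then $1=\mu(\mathcal{U}_k)\le N\,\mu(B(I,\omega))$, and it suffices to prove $\mu(B(I,\omega))\le\omega^{k^2}$. The anti-Hermitian chart enters again: if $\|U-I\|<\omega$ then, $U-I$ being normal, its operator norm equals its spectral radius, so every eigenvalue $e^{i\theta_j}$ of $U$ satisfies $2|\sin(\theta_j/2)|<\omega$ and hence $|\theta_j|<\pi\omega/2$; thus $B(I,\omega)\subseteq\exp(B_{\pi\omega/2})$. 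In these coordinates $\mu$ pulls back to $J(X)\,dX$ with Jacobian
$$J(X)=\prod_{j<l}\left(\frac{\sin((\theta_j-\theta_l)/2)}{(\theta_j-\theta_l)/2}\right)^2\le1,$$
so $\mu$ is dominated by normalized Lebesgue measure on the tangent ball, and the bound $\mu(B(I,\omega))\le\omega^{k^2}$ follows from the scaling in part (2).

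I expect this last measure estimate to be the main obstacle. The Lipschitz bound and the reduction of the upper bound to part (2) are routine, but obtaining $\mu(B(I,\omega))\le\omega^{k^2}$ with the correct constant requires the precise normalization of Haar measure against the operator norm through the Weyl integration formula, and it is exactly the elementary inequality $|\sin t/t|\le1$ that makes the Jacobian work in our favor; verifying that the constant in the lower bound is genuinely $1$ (rather than merely some explicit constant) is the delicate point of the argument.
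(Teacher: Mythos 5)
Your part (2) and the upper bound in part (1) are fine: the volume-comparison argument for part (2) is standard and correct, and your reduction of the unitary upper bound to part (2) through the surjective $1$-Lipschitz map $\exp: B_\pi\to\mathcal{U}_k$ (anti-Hermitian principal logarithm plus the integral formula for $e^X-e^Y$) is clean and even yields the better constant $3\pi$ in place of $9\pi e$. For the record, the paper itself contains no proof of this lemma at all; it simply quotes it as Lemma 6 of Dost\'al and Hadwin \cite{Don}, so the only thing to assess is your argument on its own terms.

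The genuine gap is in your lower bound for part (1), at the step ``so $\mu$ is dominated by normalized Lebesgue measure on the tangent ball.'' The Jacobian bound $J\le 1$ compares \emph{unnormalized} measures: it gives $\int_E J\,dX\le \mathrm{Leb}(E)$. But the normalized Haar measure has density $J(X)\big/\int_{B_\pi}J\,dX$ while the normalized Lebesgue measure on $B_\pi$ has density $1/\mathrm{Leb}(B_\pi)$, and since $\int_{B_\pi}J\,dX<\mathrm{Leb}(B_\pi)$, the Haar density \emph{exceeds} the Lebesgue density exactly near $X=0$, where $J\approx 1$ and where your set $B_{\pi\omega/2}$ sits. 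In fact the inequality runs the wrong way: since $\sin t/t$ is decreasing on $[0,\pi]$, one has $J(tX)\ge J(X)$ for $0\le t\le 1$, hence $\mu(\exp(B_{t\pi}))=\int_{B_{t\pi}}J\,dX\big/\int_{B_\pi}J\,dX\ge t^{k^2}$; small balls at the identity carry \emph{more} normalized Haar measure than the Lebesgue prediction, not less. So $\mu(B(I,\omega))\le\omega^{k^2}$ does not follow from $J\le 1$. The estimate itself is plausibly true, but proving it needs real quantitative input --- e.g.\ Weyl's integration formula bounds $\mu(B(I,\omega))$ by $\frac{(2s)^{k^2}}{k!(2\pi)^k}\int_{[0,1]^k}\prod_{j<l}(x_j-x_l)^2\,dx$ with $s=2\arcsin(\omega/2)\le\pi\omega/2$, and one must then invoke Selberg's integral to check that the resulting constant is below $1$ --- which is precisely the work your last paragraph defers. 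There is also a second, smaller hole: the paper's $\nu_\infty$ allows covering balls centered at arbitrary points of $\mathcal{M}_k(\Bbb C)$, not just at unitaries; repairing this by replacing a ball $B(A,\omega)$ that meets $\mathcal{U}_k$ with $B(U_0,2\omega)$ for some $U_0\in B(A,\omega)\cap\mathcal{U}_k$ costs a factor $2^{k^2}$ in the count, which cannot be absorbed when the advertised constant in the lower bound is exactly $1$.
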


\vspace{0.3cm}

Let $\mathcal  B$ be a finite-dimensional C$^*$-algebra which is
isomorphic to ${\mathcal  M}_{k_1}(\Bbb C)\oplus\cdots
\oplus{\mathcal M}_{k_r}(\Bbb C)$ for some positive integers $k_1,
\ldots, k_r$. To simplify the notation, we will use
$\{e_{st}^{(\iota)}\}_{s,t,\iota}$ to denote a set
$\{e_{st}^{(\iota)}: 1\leq s,t\leq k_{\iota}, 1\leq \iota\leq r\}$
of matrix units for $\mathcal B$, let
$\{\mbox{Re}(e_{st}^{(\iota)})\}_{s,t,\iota}$ denote the set
$\{\frac {\ e_{st}^{(\iota)}+( e_{st}^{(\iota)})^*} 2 :1\leq s,t\leq
k_{\iota},1\leq \iota\leq r\}$, and let
$\{\mbox{Im}(e_{st}^{(\iota)})\}_{s,t,\iota}$ denote the set
$\{\frac {\ e_{st}^{(\iota)}-( e_{st}^{(\iota)})^*}
{2\sqrt{-1}}:1\leq s,t\leq k_{\iota},1\leq \iota\leq r\}$.

\begin{lemma}\label{lemma, less than omega} Let $\mathcal  A$ be a unital separable approximately divisible
C$^*$-algebra with unit $I_{\mathcal  A}$, and $\{x_1,\ldots, x_n\}$
be a family of self-adjoint generators of $\mathcal  A$. Then, for
any $\omega>0$ and positive integer $N$, there exists a
finite-dimensional C$^*$-subalgebra ${\mathcal  B}\subseteq
{\mathcal  A}$ with a set of matrix units
$\{e_{st}^{(\iota)}\}_{s,t,\iota}=\{{e_{st}^{(\iota)}}: 1\leq
s,t\leq k_{\iota}, 1\leq \iota\leq r\}$, a positive integer $m_0$
and $1>\varepsilon_0>0$, such that

(1) $I_{\mathcal  A}\in {\mathcal  B}$,

(2) $\mbox{SubRank}({\mathcal  B})\geq N$,

(3) for any $m\geq m_0$, $\varepsilon\leq \varepsilon_0$, and any
$k\geq 1$, if
$$(A_{1},\ldots,A_{n},\{B_{st}^{(\iota)}\}_{s,t,\iota},\{C_{st}^{(\iota)}\}_{s,t,\iota})
\in\Gamma_{top}(x_{1},\ldots,x_{n},\{\mbox{Re}(e_{st}^{(\iota)})\}_{s,t,\iota},
\{\mbox{Im}(e_{st}^{(\iota)})\}_{s,t,\iota};k,\varepsilon, m),$$
then there exists a set $\{P_{st}^{(\iota)}:1\leq s,t\leq k_{\iota},
1\leq \iota\leq r\}$ of matrix units for a copy of $\mathcal  B$ in ${\mathcal
M}_k(\Bbb C)$ so that
$$\| A_{j}-\sum_{1\leq \iota\leq r}\sum_{1\leq s\leq
k_{\iota}}P_{ss}^{(\iota)}A_{j}P_{ss}^{(\iota)}\|\leq 2\omega.$$

\end{lemma}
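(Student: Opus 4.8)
The plan is to realize the required compression $A_j \mapsto \sum_{\iota,s} P_{ss}^{(\iota)}A_jP_{ss}^{(\iota)}$ as an \emph{average of unitary conjugations}, first inside $\mathcal A$, then transport it to $\mathcal M_k(\mathbb C)$ through Lemma \ref{lemma,almost matrix unit is matrix unit}.

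\emph{Construction of $\mathcal B$.} First I would produce $\mathcal B$ with (1) and (2) together with a single approximate-commutation bound. Since $\mathcal A=\overline{\cup_m \mathcal A_m}$ by Proposition \ref{proposition, ad form}(1), choose $m$ and $\tilde x_j\in\mathcal A_m$ with $\|x_j-\tilde x_j\|<\omega/2$ for $1\le j\le n$. By Proposition \ref{proposition, ad form}(3) there is a finite-dimensional $\mathcal B\subseteq \mathcal A_m'\cap\mathcal A$ with $I_{\mathcal A}\in\mathcal B$ and $\mathrm{SubRank}(\mathcal B)\ge N$, giving (1) and (2); and since every $y\in\mathcal B$ commutes with $\tilde x_j$, for $\|y\|\le 1$ we get $\|x_jy-yx_j\|=\|(x_j-\tilde x_j)y-y(x_j-\tilde x_j)\|<\omega$. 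Fix matrix units $\{e_{st}^{(\iota)}\}_{s,t,\iota}$ for $\mathcal B$ and relabel the diagonal projections $\{e_{ss}^{(\iota)}\}$ as $q_1,\dots,q_D$, where $D=\mathrm{Rank}(\mathcal B)=\sum_\iota k_\iota$. For $g=0,1,\dots,D-1$ set $u_g=\sum_{a=1}^D\zeta^{ga}q_a$ with $\zeta=e^{2\pi\sqrt{-1}/D}$; each $u_g$ is a unitary of $\mathcal B$, so $\|x_ju_g-u_gx_j\|<\omega$ for all $g,j$. A Fourier computation over $\mathbb Z_D$ gives the exact identity $\tfrac1D\sum_{g=0}^{D-1}u_g\,T\,u_g^*=\sum_{\iota,s}e_{ss}^{(\iota)}\,T\,e_{ss}^{(\iota)}$ for every $T\in\mathcal A$.

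\emph{Choice of $m_0,\varepsilon_0$ and passage to $\mathcal M_k(\mathbb C)$.} Fix a small $\varepsilon'>0$ (to be calibrated against $\omega$, $D$ and $\max_j\|x_j\|$, all now fixed) and let $\delta>0$ be the constant of Lemma \ref{lemma,almost matrix unit is matrix unit} for $\mathcal B$ and $\varepsilon'$. The defining relations (1)--(3) of that lemma for the genuine matrix units $\{e_{st}^{(\iota)}\}$, together with the commutators $[x_j,u_g]$, are exact (or of known norm) relations among finitely many noncommutative polynomials in $x_1,\dots,x_n$ and $\{\mathrm{Re}(e_{st}^{(\iota)}),\mathrm{Im}(e_{st}^{(\iota)})\}$. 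Choosing $m_0$ large enough that all of them (after rational approximation of their coefficients) appear among $p_1,\dots,p_{m_0}$, and $\varepsilon_0\le\min(\delta,\varepsilon')$ small, membership of a tuple in $\Gamma_{top}(\cdots;k,\varepsilon,m)$ with $m\ge m_0$, $\varepsilon\le\varepsilon_0$ forces the approximate matrix units $a_{st}^{(\iota)}:=B_{st}^{(\iota)}+\sqrt{-1}\,C_{st}^{(\iota)}$ to satisfy the hypotheses of Lemma \ref{lemma,almost matrix unit is matrix unit} with constant $\delta$. That lemma then yields exact matrix units $\{P_{st}^{(\iota)}\}$ for a copy of $\mathcal B$ in $\mathcal M_k(\mathbb C)$ with $\|a_{st}^{(\iota)}-P_{st}^{(\iota)}\|<\varepsilon'$; in particular $\sum_{\iota,s}P_{ss}^{(\iota)}=I_k$.

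\emph{The compression estimate.} At the matrix level put $U_g=\sum_{a=1}^D\zeta^{ga}P_a$ (with $P_a$ the relabelled diagonal $P_{ss}^{(\iota)}$), a unitary of $\mathcal M_k(\mathbb C)$. The same Fourier identity gives $\sum_{\iota,s}P_{ss}^{(\iota)}A_jP_{ss}^{(\iota)}=\tfrac1D\sum_g U_gA_jU_g^*$, whence
$$\Big\|A_j-\sum_{\iota,s}P_{ss}^{(\iota)}A_jP_{ss}^{(\iota)}\Big\|=\Big\|\tfrac1D\sum_g(A_j-U_gA_jU_g^*)\Big\|\le\max_{0\le g\le D-1}\|A_jU_g-U_gA_j\|.$$
To bound each commutator, replace $U_g$ by $\tilde U_g=\sum_a\zeta^{ga}a_a$ at a cost of at most $2\|A_j\|\,D\,\varepsilon'$, which is $\le\omega/2$ once $\varepsilon'$ is small (here $\|A_j\|\le\max_j\|x_j\|+1$ since the tuple lies in $\Gamma_{top}$). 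Now $A_j\tilde U_g-\tilde U_gA_j$ is exactly the microstate evaluation of the polynomial $[x_j,u_g]$ in $x_1,\dots,x_n,\{\mathrm{Re}(e),\mathrm{Im}(e)\}$; since $\|[x_j,u_g]\|<\omega$ in $\mathcal A$ and this polynomial (after rational approximation of its phases $\zeta^{ga}$) lies among $p_1,\dots,p_{m_0}$, the $\Gamma_{top}$-condition gives $\|A_j\tilde U_g-\tilde U_gA_j\|\le\omega+\omega/2$. Adding the two auxiliary errors yields $\|A_jU_g-U_gA_j\|\le 2\omega$, as required.

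\emph{Main obstacle.} The delicate point is that estimating $\sum_{\iota,s}e_{ss}^{(\iota)}x_je_{ss}^{(\iota)}$ naively, term by term, would lose a factor $\mathrm{Rank}(\mathcal B)$; the whole argument turns on rewriting it as an average of the $D$ conjugations by $u_0,\dots,u_{D-1}$, so that the single bound $\|[x_j,u_g]\|<\omega$ controls everything. The quantifier order is essential: $\mathcal B$, and hence $D$, is fixed before $m_0,\varepsilon_0$, so the finitely many $\mathbb Z_D$-unitaries, the rational approximation of their phases, and the closeness $\varepsilon'$ from Lemma \ref{lemma,almost matrix unit is matrix unit} can all be absorbed into the eventual choice of $m_0$ and $\varepsilon_0$.
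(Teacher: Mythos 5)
Your proposal is correct, and its skeleton coincides with the paper's: the same construction of $\mathcal B$ (approximate each $x_j$ by $\tilde x_j\in\mathcal A_m$ within $\omega/2$, then take a finite-dimensional $\mathcal B\subseteq\mathcal A_m'\cap\mathcal A$ with $I_{\mathcal A}\in\mathcal B$ and $\mbox{SubRank}(\mathcal B)\geq N$ via Proposition~\ref{proposition, ad form}), the same key Lemma~\ref{lemma,almost matrix unit is matrix unit} to upgrade the microstates $D_{st}^{(\iota)}=B_{st}^{(\iota)}+\sqrt{-1}\,C_{st}^{(\iota)}$ to exact matrix units $P_{st}^{(\iota)}$, and the same quantifier discipline (fix $\mathcal B$ first, then calibrate $m_0,\varepsilon_0$). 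Where you genuinely differ is in how the pinching estimate is transported to $\mathcal M_k(\mathbb C)$: the paper proves directly in $\mathcal A$ that $\|x_j-\sum_{\iota,s}e_{ss}^{(\iota)}x_je_{ss}^{(\iota)}\|<\omega$, feeds the single polynomial $X_j-\sum_{\iota,s}E_{ss}X_jE_{ss}$ (integer coefficients, so genuinely among the $p_m$) through the $\Gamma_{top}$ condition to get $\|A_j-\sum_{\iota,s}D_{ss}^{(\iota)}A_jD_{ss}^{(\iota)}\|\leq\omega+\varepsilon$, and finishes with three error terms controlled by $\|P_{st}^{(\iota)}-D_{st}^{(\iota)}\|<\omega/(24R\,\mbox{Rank}(\mathcal B))$; you instead push the $\mbox{Rank}(\mathcal B)$ commutator polynomials $[X_j,u_g]$ through $\Gamma_{top}$ and reassemble with the Fourier identity at the matrix level. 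Both work; the paper's route is lighter on bookkeeping (one polynomial per generator, no rational approximation of the phases $\zeta^{ga}$ needed), while yours keeps everything phrased in terms of commutators with unitaries of $\mathcal B$, which matches the definition of approximate divisibility more directly. One correction to your closing remark: the ``main obstacle'' you describe is not actually an obstacle. Since the $e_{ss}^{(\iota)}$ are mutually orthogonal projections summing to $I_{\mathcal A}$, the pinching satisfies $\bigl\|\sum_a q_aTq_a\bigr\|=\max_a\|q_aTq_a\|\leq\|T\|$, so no factor of $\mbox{Rank}(\mathcal B)$ is lost and no averaging trick is required; this elementary fact is exactly what the paper uses. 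The Rank factor does surface when swapping approximate for exact matrix units, and there you and the paper handle it the same way, by demanding closeness inversely proportional to $\mbox{Rank}(\mathcal B)$.
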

\begin{proof} Suppose ${\mathcal  A}=\overline{\cup_m{\mathcal  A}_m}^{\|\cdot\|}$, where ${\mathcal
A}_m$
 is as in Proposition \ref{proposition, ad form}. For any
$\omega>0$, any positive integer $N$ and self-adjoint elements $x_1,\ldots,x_n$, there are self-adjoint
elements $y_{1},\ldots,y_{n}$ in some ${\mathcal{A}}_{m}$ such that
$\Vert x_{j}-y_{j}\Vert<$ $\frac{\omega}{2}$ for all $1\leq j\leq
n$. From part (3) of Proposition \ref{proposition, ad form}, there exists a
finite-dimensional subalgebra $\mathcal{B}$ of
${\mathcal{A}}_{m}'\cap{\mathcal{A}}$ such that $I_{\mathcal  A}\in {\mathcal
B}$ and $\mbox{SubRank}({\mathcal{B}})\geq N$. Let
$\{{e_{st}^{(\iota)}}: 1\leq s,t\leq k_{\iota}, 1\leq \iota\leq r\}$
be a set of matrix units for $\mathcal  B$. Then, for $1\leq j\leq n$,
 \begin{eqnarray*} && \| x_{j}-\sum_{1\leq \iota\leq
r}\sum_{1\leq s\leq k_{\iota}}e_{ss}^{(\iota)}
x_{j}e_{ss}^{(\iota)}\|\\
&=&\|(x_{j}-y_{j})+y_{j}-\sum_{1\leq \iota\leq r}\sum_{1\leq s\leq
k_{\iota} }e_{ss}^{(\iota)}(x_{j}-y_{j})e_{ss}^{(\iota)}-\sum_{1\leq
\iota\leq r}\sum_{1\leq s\leq
k_{\iota}}e_{ss}^{(\iota)}y_{j} e_{ss}^{(\iota)}\|\\
&=& \|(x_{j}-y_{j})-\sum_{1\leq \iota\leq r}\sum_{1\leq s\leq
k_{\iota}}e_{ss} ^{(\iota)}(x_{j}-y_{j})e_{ss}^{(\iota)}\|\\
&\leq &  \|(x_{j}-y_{j})||+||\sum_{1\leq \iota\leq
r}\sum_{1\leq s\leq k_{\iota}}e_{ss}^{(\iota)}(x_{j}-y_{j})e_{ss}^{(\iota)}\|\\
&\leq &
\|(x_{j}-y_{j})||+\max\{||e_{ss}^{(\iota)}(x_{j}-y_{j})e_{ss}^{(\iota)}\|\}\\
&<& \frac{\omega}{2}+\frac{\omega}{2}=\omega.
\end{eqnarray*}

Let $R=\max\{\|x_1\|,\ldots, \|x_n\|,1\}$.

By Lemma \ref{lemma,almost matrix unit is matrix unit}, there are
$0<\varepsilon_0<\min\{1,\frac{\omega}{2}\}$ and positive integer
$m_0$, such that, for any $m\geq m_0$, $\varepsilon\leq
\varepsilon_0$ and $k\geq 1$, if
$$ (A_{1},\ldots,A_{n},\{B_{st}^{(\iota)}\}_{s,t,\iota},
\{C_{st}^{(\iota)}\}_{s,t,\iota})\in\Gamma_{top}(x_{1},\ldots,x_{n},\{\mbox{Re}(e_{st}^{(\iota)})\}_{s,t,\iota},
\{\mbox{Im}(e_{st}^{(\iota)})\}_{s,t,\iota};k,\varepsilon,m),$$ then
there exists a set $\{P_{st}^{(\iota)}: 1\leq s,t\leq k_{\iota},
1\leq \iota\leq r\}\subset{\mathcal{M}}_{k}^{s.a}(\mathbb{C)}$ such
that

(a) $\{P_{st}^{(\iota)}:1\leq s,t\leq k_{\iota}, 1\leq \iota\leq
r\}$ is exactly a set of matrix units for a copy of $\mathcal  B$ in
${\mathcal M}_k(\Bbb C)$,

(b) For any $1\leq \iota\leq r,1\leq s,t\leq k_{\iota}$,
$$\Vert P_{st}^{(\iota)}-(B_{st}^{(\iota)}+\sqrt{-1}\cdot C_{st}^{(\iota)})%
\Vert<\frac{\omega}{24R\cdot\mbox{Rank}({\mathcal  B})}.$$

Let $D_{st}^{(\iota)}=B_{st}^{(\iota)}+\sqrt{-1}\cdot
C_{st}^{(\iota)}$. We have
\begin{eqnarray*}&& \| A_{j}-\sum_{1\leq \iota\leq r}\sum_{1\leq
s\leq
k_{\iota}}P_{ss}^{(\iota)}A_{j}P_{ss}^{(\iota)}\|\\
&\leq &   \|A_{j}-\sum_{1\leq \iota\leq r}\sum_{1\leq s\leq
k_{\iota}}D_{ss}^{(\iota)}A_{j}D_{ss}^{(\iota)}\|+\|\sum_{1\leq
\iota\leq r}\sum_{1\leq s\leq
k_{\iota}}(P_{ss}^{(\iota)}-D_{ss}^{(\iota)})A_{j}D_{ss}^{(\iota)}\|\\
&&+\|\sum_{1\leq \iota\leq r}\sum_{1\leq s\leq
k_{\iota}}D_{ss}^{(\iota)}A_{j}(P_{ss}^{(\iota)}-D_{ss}^{(\iota)})\|\\
&&+\|\sum_{1\leq \iota\leq r}\sum_{1\leq s\leq
k_{\iota}}(P_{ss}^{(\iota)}-D_{ss}^{(\iota)})A_{j}(P_{ss}^{(\iota)}-D_{ss}^{(\iota)})\|\\
&\leq &
\omega+\varepsilon+\frac{\omega}{6}+\frac{\omega}{6}+\frac{\omega}{6}\\
& \leq &   2\omega.
\end{eqnarray*}
\end{proof}

\begin{theorem}\label{theorem, upper bound}
Suppose $\mathcal{A}$ is a unital separable approximately divisible
C$^{*}$-algebra generated by self-adjoint elements $x_1,\ldots,
x_n$. Then
$$\delta _{top}(x_1,\ldots, x_n)\leq 1.$$
\end{theorem}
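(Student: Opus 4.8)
The plan is to bound the covering numbers of the microstate spaces directly, following the strategy used for UHF algebras in \cite{Don-shen 2}, and to exploit Lemma \ref{lemma, less than omega} to show that every microstate is, up to a small error, a unitary conjugate of a block-diagonal matrix living in a real vector space of dimension at most $k^2/N$. Since $N$ may be taken arbitrarily large, the block-diagonal contribution to the entropy will be negligible, and the only surviving contribution, coming from the conjugating unitary, will yield the bound $1$.

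First I would fix $\omega>0$ and a large positive integer $N$, apply Lemma \ref{lemma, less than omega} with parameter $\omega/3$ to obtain a finite-dimensional subalgebra $\mathcal B\cong\mathcal M_{k_1}(\Bbb C)\oplus\cdots\oplus\mathcal M_{k_r}(\Bbb C)$ with $\mbox{SubRank}(\mathcal B)=\min_\iota k_\iota\geq N$, a set of matrix units $\{e_{st}^{(\iota)}\}_{s,t,\iota}$, and thresholds $m_0,\varepsilon_0$. Using Lemma \ref{lemma, x:y=x:y,z}(1) I would replace $\delta_{top}(x_1,\ldots,x_n;\omega)$ by the relative quantity $\delta_{top}(x_1,\ldots,x_n:\{\mbox{Re}(e_{st}^{(\iota)})\},\{\mbox{Im}(e_{st}^{(\iota)})\};\omega)$, so that I may work with microstates simultaneously approximating the $x_j$ and the matrix units of $\mathcal B$. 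For $m\geq m_0$ and $\varepsilon\leq\varepsilon_0$, any $(A_1,\ldots,A_n)$ in the (projected) relative microstate space admits, by Lemma \ref{lemma, less than omega}, exact matrix units $\{P_{st}^{(\iota)}\}$ for a copy of $\mathcal B$ in $\mathcal M_k(\Bbb C)$ with $\|A_j-\sum_{\iota,s}P_{ss}^{(\iota)}A_jP_{ss}^{(\iota)}\|\leq 2\omega/3$ for every $j$.

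The heart of the estimate is a dimension count. Writing the pinching as $\hat A_j=\sum_{\iota,s}P_{ss}^{(\iota)}A_jP_{ss}^{(\iota)}$ and choosing $U\in\mathcal U_k$ carrying a fixed standard set $\{Q_{st}^{(\iota)}\}$ of matrix units (for a copy of $\mathcal B$ with the same multiplicities $\mu_\iota=\mbox{rank}(P_{ss}^{(\iota)})$) onto $\{P_{st}^{(\iota)}\}$, I get $\hat A_j=UD_jU^*$, where $D_j=\sum_{\iota,s}Q_{ss}^{(\iota)}(U^*A_jU)Q_{ss}^{(\iota)}$ is block-diagonal, self-adjoint, and satisfies $\|D_j\|\leq R+1$ with $R=\max\{\|x_1\|,\ldots,\|x_n\|,1\}$ (pinching is norm-contractive and $\|A_j\|<R+1$ for microstates). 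Each diagonal block is an $\mu_\iota\times\mu_\iota$ self-adjoint matrix, so the $D_j$ range over a real space of dimension $\sum_\iota k_\iota\mu_\iota^2$; since $\sum_\iota k_\iota\mu_\iota=k$ and $k_\iota\geq N$, the inequality $\sum_\iota k_\iota\mu_\iota^2=\sum_\iota\frac{1}{k_\iota}(k_\iota\mu_\iota)^2\leq\frac1N\big(\sum_\iota k_\iota\mu_\iota\big)^2=\frac{k^2}{N}$ bounds this dimension by $k^2/N$, uniformly in the embedding. I would then cover $\mathcal U_k$ to mesh $\omega'=\omega/(3(2R+3))$ by Lemma \ref{lemma, covering number}(1) (at most $(9\pi e/\omega')^{k^2}$ balls) and cover the radius-$(R+1)$ ball of block-diagonal matrices for each of the $n$ indices by Lemma \ref{lemma, covering number}(2) (at most $(3(R+1)/\omega')^{nk^2/N}$ balls), absorbing the at-most-$(k+1)^r$ multiplicity patterns $(\mu_1,\ldots,\mu_r)$ into a subexponential factor. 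A routine Lipschitz estimate for $(U,D_1,\ldots,D_n)\mapsto(UD_1U^*,\ldots,UD_nU^*)$ shows these nets produce $\omega$-balls covering the relative microstate space.

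Assembling the pieces gives $\nu_\infty\leq (k+1)^r(9\pi e/\omega')^{k^2}(3(R+1)/\omega')^{nk^2/N}$; taking logarithms, dividing by $-k^2\log\omega$, and letting $k\to\infty$ kills the $(k+1)^r$ factor and yields $\delta_{top}(x_1,\ldots,x_n;\omega)\leq\frac{\log(9\pi e/\omega')}{\log(1/\omega)}+\frac nN\cdot\frac{\log(3(R+1)/\omega')}{\log(1/\omega)}$. As $\omega'$ differs from $\omega$ by a fixed constant factor, letting $\omega\to0^+$ sends both ratios to $1$, so $\delta_{top}(x_1,\ldots,x_n)\leq 1+n/N$; since $N$ was arbitrary, $\delta_{top}(x_1,\ldots,x_n)\leq 1$. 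The step I expect to be the main obstacle is the uniform dimension bound $\sum_\iota k_\iota\mu_\iota^2\leq k^2/N$ together with the bookkeeping that makes the covering argument rigorous (the conjugating unitary, the Lipschitz perturbation, and the finitely many multiplicity patterns); the remaining reduction to the conjugation class in $\mathcal U_k$ is the standard mechanism producing the value $1$.
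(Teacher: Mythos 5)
Your proposal is correct and follows essentially the same route as the paper's proof: pinch microstates to block-diagonal form via Lemma \ref{lemma, less than omega}, diagonalize the resulting projections by a unitary, bound the real dimension of the block-diagonal space by $k^2/N$, cover the unitary group and the block-diagonal ball separately via Lemma \ref{lemma, covering number}, absorb the polynomially many multiplicity patterns, and let $\omega\to 0^+$ and $N\to\infty$. The only differences are bookkeeping conventions (your mesh $\omega/3$ and radius $R+1$ versus the paper's final $4\omega$ and radius $4R$, and your $(k+1)^r$ count of multiplicity patterns versus the paper's $(k/N)^r$), none of which affect the argument.
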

\begin{proof}
For any positive integer $N$, $1>\omega>0$, from Lemma \ref{lemma,
less than omega}, there exists a finite-dimensional C$^*$-subalgebra
${\mathcal  B}\subseteq {\mathcal  A}$ with a set of matrix units
$\{e_{st}^{(\iota)}\}_{s,t,\iota}=\{{e_{st}^{(\iota)}}: 1\leq
s,t\leq k_{\iota}, 1\leq \iota\leq r\}$, a positive integer $m_0$
and $1>\varepsilon_0>0$, such that

(a)\ $I_{\mathcal  A}\in {\mathcal  B}$, where $I_{\mathcal A}$ is
the unit of $\mathcal A$,

(b)\ $\mbox{SubRank}({\mathcal  B})\geq N$,

(c)\ for $m\geq m_0$ and $\varepsilon\leq \varepsilon_0$, and for
any $k\geq 1$, if
\begin{equation}(A_{1},\ldots,A_{n},\{B_{st}^{(\iota)}\}_{s,t,\iota},\{C_{st}^{(\iota)}\}_{s,t,\iota})
\in\Gamma_{top}(x_{1},\ldots,x_{n},\{\mbox{Re}(e_{st}^{(\iota)})\}_{s,t,\iota},
\{\mbox{Im}(e_{st}^{(\iota)})\}_{s,t,\iota};k,\varepsilon,
m),\end{equation} then there exists a set $\{P_{st}^{(\iota)}:1\leq
s,t\leq k_{\iota}, 1\leq \iota\leq r\}$ of matrix units for a copy
of $\mathcal B$ in ${\mathcal M}_k(\Bbb C)$ so that
$$\| A_{j}-\sum_{1\leq \iota\leq r}\sum_{1\leq s\leq
k_{\iota}}P_{ss}^{(\iota)}A_{j}P_{ss}^{(\iota)}\|\leq 2\omega.$$

Note that $\{P_{ss}^{(\iota)}:{1\leq \iota\leq r, \ 1\leq s\leq
k_{\iota}}\}$ is a family of mutually orthogonal projections  with
the sum $I_k$ in ${\mathcal{M}}_{k}(\mathbb{C)}$. There is some
unitary matrix  $U\in \mathcal U_k$ such that $U^{\ast
}P_{ss}^{(\iota)}U(=Q_{ss}^{(\iota)})$ is diagonal for any $1\leq
\iota\leq r$ and $1\leq s\leq k_{\iota}$. Then, for any $1\leq j\leq
n$, \begin{equation}\Vert A_{j}-U(\sum_{1\leq \iota\leq
r}\sum_{1\leq s\leq k_{\iota}}Q_{ss}^{(\iota)}(U^{\ast
}A_{j}U)Q_{ss}^{(\iota)})U^{\ast}\Vert\leq 2\omega.\end{equation}
Thus, for $1\leq j\leq n$,
$$\|\sum_{1\leq \iota\leq r}\sum_{1\leq s\leq
k_{\iota}}Q_{ss}^{(\iota)}(U^{\ast }A_{j}U)Q_{ss}^{(\iota)}\|\leq
\|A_j\|+2\omega\leq 4R.$$ Therefore \begin{equation}(\sum_{1\leq
\iota\leq r}\sum_{1\leq s\leq
k_{\iota}}Q_{ss}^{(\iota)}(U^*A_{1}U)Q_{ss}^{(\iota)},\ldots,
\sum_{1\leq \iota\leq r}\sum_{1\leq s\leq
k_{\iota}}Q_{ss}^{(\iota)}(U^*A_{n}U)Q_{ss}^{(\iota)})\in
Ball(0,\ldots,0; 4R,\|\cdot\|), \end{equation} i.e., it is contained
in the
 ball centered at $(0,\ldots, 0)$ with radius $4R$ in $(\mathcal M_k(\Bbb C))^n$.

Since $\{P_{st}^{(\iota)}:1\leq s,t\leq k_{\iota}, 1\leq \iota\leq
r\}$ is a system of  matrix units for a copy of  $\mathcal  B$ in
${\mathcal M}_k(\Bbb C)$ such that $$ \sum_{1\leq \iota\leq r, \
1\leq s\leq k_{\iota}} P_{ss}^{(\iota)}=I_k,
$$ we know that there is a unital embedding from $\mathcal B$ into $\mathcal M_k(\Bbb C)$. It follows that
there are positive integers $c_1,\ldots,c_r$ satisfying
\begin{enumerate}
  \item [(i)] $Rank \ P_{11}^{(\iota)}=\cdots
=Rank \ P_{k_{\iota},k_{\iota}}^{(\iota)}=c_\iota$ for all $1\le \iota\le r$, where Rank $T$
is the rank of the matrix $T$ for any $T$ in $\mathcal M_k(\Bbb C)$; and
  \item [(ii)] $c_{1}k_{1}+\cdots+c_{r}k_{r}=k$.
\end{enumerate}
By the restriction on the C$^*$-algebra $\mathcal B$ (see condition
(b) as above), we know that $SubRank(B)\ge N$, i.e.,
$$
\min\{k_1,\ldots, k_r\}\ge N.
$$ By (ii), we obtain that \begin{equation}
\min\{c_1,\ldots, c_r\}\le \frac{k}{N}.\end{equation}
By (i), we know that
$$
Rank \ Q_{11}^{(\iota)}=\cdots =Rank \
Q_{k_{\iota},k_{\iota}}^{(\iota)}=Rank \ P_{11}^{(\iota)}=\cdots
=Rank \ P_{k_{\iota},k_{\iota}}^{(\iota)}=c_\iota, \ \ \ for \ 1\le
\iota\le r.
$$
Thus
the real-dimension
of the linear space $\sum_{1\leq \iota\leq r}\sum_{1\leq j\leq k_{\iota}}Q_{jj}^{(\iota)}%
{\mathcal M}{_{k}(\mathbb{C)}}^{s.a}Q_{jj}^{(\iota)}$ is \begin{equation}
dim_{\Bbb R}\left (\sum_{1\leq \iota\leq r}\sum_{1\leq j\leq k_{\iota}}Q_{jj}^{(\iota)}%
{\mathcal M}{_{k}(\mathbb{C)}}^{s.a}Q_{jj}^{(\iota)}\right )= c_{1}^{2}k_{1}+\cdots+c_{r}%
^{2}k_{r}.\end{equation} By the inequality (16), we get
\begin{equation}c_{1}^{2}
k_{1}+\cdots+c_{r}^{2}k_{r}\leq\frac{k}{N}(c_{1}k_{1}+\cdots+c_{r}k_{r}
)=\frac{k^{2}}{N}.\end{equation}

For any such family of positive integers $c_1,\ldots,c_r$ with
$c_1k_{1}+\cdots+c_{r}k_{r}=k$, and the family of mutually
orthogonal diagonal projections $\{Q_{ss}^{(\iota)}\}_{1\leq s\leq
k_{\iota}, 1\leq \iota\leq r}$ with
$$
Rank(Q_{ss}^{(\iota)})=c_\iota, \ \ \forall \ 1\leq \iota\leq r,
$$  we define
\begin{eqnarray*}
\Omega(\{Q_{ss}^{(\iota)}\}_{  s ,
  \iota })   &=&  \{  (\sum_{1\leq \iota\leq
r}\sum_{1\leq s\leq
k_{\iota}}Q_{ss}^{(\iota)}T_1Q_{ss}^{(\iota)},\ldots,
\sum_{1\leq \iota\leq r}\sum_{1\leq s\leq
k_{\iota}}Q_{ss}^{(\iota)}T_nQ_{ss}^{(\iota)}): \\
&&  T_i=T_i^*\in \mathcal M_k(\Bbb C), \ \forall \ 1\le i\le n \},
\end{eqnarray*} which is a subset of $(\mathcal M_k^{s.a.}(\Bbb
C))^n$.

Therefore, combining Lemma \ref{lemma, covering
number}, equation (17) and inequality (18), for any $\omega>0$ we have
\begin{equation}\nu_{\infty}(\Omega(\{Q_{ss}^{(\iota)}\}_{  s ,
  \iota }) \ \cap \ Ball (0,\ldots,0;4R, \|\ \|); \ \omega
)\leq(\frac{12R}{\omega})^{\frac{nk^{2}}{N}}.\end{equation}
Let
$$
\Lambda = \{(c_1,\ldots,c_r) : \exists k_1,\ldots,k_r\in {\Bbb N}\
\mbox{such that}\ c_1k_1+\cdots +c_rk_r=k\}.$$ By inequality (13),
we know that the cardinality of the set $\Lambda$ satisfies
\begin{equation}Card(\Lambda) \le (\frac{k}{N})^{r}.\end{equation}
Let
\begin{eqnarray*}
\Omega&=& \cup_{(c_1,\ldots,c_r)\in\Lambda} \left \{
\Omega(\{Q_{ss}^{(\iota)}\}_{  s ,
  \iota })  \ | \    \{Q_{ss}^{(\iota)}\}_{1\leq s\leq k_{\iota},
1\leq \iota\leq r} \mbox{ is a family of mutually }\right . \\
&&\left . \mbox{ orthogonal   diagonal projections with $
Rank(Q_{ss}^{(\iota)})=c_\iota,$} \ \ \forall \ 1\leq \iota\leq r
  \right \}.
\end{eqnarray*}
By inequalities (19) and (20), we know that
\begin{equation}
\nu_{\infty}(\Omega  \ \cap \ Ball (0,\ldots,0;4R, \|\ \|); \ \omega
) \le  (\frac{12R}{\omega})^{\frac{nk^{2}}{N}}\cdot  (\frac{k}{N})^{r}.
\end{equation}

Based on (13), (14), (15), (21), and Lemma \ref{lemma, covering
number}, now it is a standard argument to show:
\begin{eqnarray*}
 &&
\delta_{top}(x_{1},\ldots,x_{n}:\{\mbox{Re}(e_{st}^{(\iota)})\}_{s,t,\iota},
\{\mbox{Im}(e_{st}^{(\iota)})\}_{s,t,\iota};4\omega) \\
&  \leq & \limsup_{k\rightarrow\infty}\frac{\log
\left((\frac{k}{N})^{r}(\frac{12R}{\omega})^{\frac{nk^{2}}{N}}(\frac{9\pi
e}{\omega})^{k^{2}}\right)}{-k^2\log (4\omega)}\\
&=& 1+\frac{n}{N}+\frac{\frac{n}{N}\log (12R)+\log(9\pi
e)}{-\log\omega}.
\end{eqnarray*}
By Lemma \ref{lemma, x:y=x:y,z},
$$\delta_{top}(x_{1},\ldots,x_{n};4\omega)\leq
1+\frac{n}{N}+\frac{\frac{n}{N}\log (12R)+\log(9\pi
e)}{-\log\omega}.$$ Therefore $$\delta_{top}(x_1,\ldots,
x_n)=\limsup_{\omega\rightarrow0^{+}}\delta_{top}(x_1,\ldots,x_n;4\omega)\leq
1+\frac{n}{N}.$$
\bigskip Since $N$ is arbitrarily large, $\delta_{top}(x_{1},\ldots
,x_{n})\leq 1.$ \end{proof}

\subsection{Lower bound of topological free entropy dimension of an approximately divisible C$^*$-algebra}

\hspace{1.5em}

The following definition is Definition 5.3 in
\cite{Don-shen 2}.

\begin{definition} Suppose $\mathcal  A$ is a unital C$^*$-algebra and $x_1,\ldots,
x_n$ is a family of self-adjoint elements of $\mathcal  A$ that
generates $\mathcal  A$ as a C$^*$-algebra. If for any $m\in\Bbb N$,
$\varepsilon>0$, there is a sequence of positive integers
$k_1<k_2<\cdots$ such that, for $s\geq 1$,
$$\Gamma_{top}(x_1,\ldots, x_n:y_1,\ldots, y_t;k_s,\varepsilon, m)\neq \emptyset,$$
then $\mathcal  A$ is called having approximation property.
\end{definition}

Using the idea in the proof of Lemma 5.4 in \cite{Don-shen 2}, we
can prove the following lemma.

\begin{lemma}\label{estimation,top}Suppose
$m_1,m_2,\ldots, m_r$ is a family of positive integers with
summation $m$ and $m_1,\ldots, m_r\geq N$ for some positive integer
$N$. Suppose $k_1, \ldots, k_{m}$ is a family of positive integers
with summation $k$ and for every $1\leq s\leq r$, $k_{m_1+\cdots
+m_{s-1}+1}=\cdots=k_{m_1+\cdots +m_{s}}$ ($m_0=0$). If $A=A^*\in
{\mathcal  M}_k({\Bbb C})$, and for some $U\in {\mathcal  U}_k$,
$$\|A-U\left(\begin{array}{llll}1\cdot I_{k_1}&0&\cdots &0\\
0& 2\cdot I_{k_2}& \cdots &0\\
\cdots &\cdots &\ddots&\cdots\\
0&0& \cdots &m\cdot I_{k_m}\end{array}\right)U^*\|\leq
\frac{2}{N^3},$$ then, for any $\omega>0$, we have
$$\nu_{\infty}(\Omega(A), \omega)\geq
(8C_1\omega)^{-k^2}\left(\frac{2C}{\omega}\right)^{\frac{-50k^2}{N}},$$
for some constants $C_1, C>1$ independent of $k,\omega$, where
$$\Omega(A)=\{W^*AW: W\in {\mathcal  U}_k\}.$$
\end{lemma}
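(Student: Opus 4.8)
The plan is to view the orbit $\Omega(A)=\{W^*AW:W\in\mathcal U_k\}$ as the image of the unitary group under the conjugation map $\Phi(W)=W^*AW$, and to bound its covering number from below by comparing $\nu_\infty(\mathcal U_k,\omega)$ with the covering number of a small neighborhood of the stabilizer of $A$. First I would record the spectral picture of $A$. Writing $D=\mathrm{diag}(1\cdot I_{k_1},\dots,m\cdot I_{k_m})$, the hypothesis is $\|A-UDU^*\|\le 2/N^3$, and since consecutive eigenvalues of $D$ differ by $1\gg 2/N^3$, Weyl's inequality shows $\mathrm{spec}(A)$ splits into $m$ clusters, the $i$-th lying in $[i-2/N^3,\,i+2/N^3]$ and carrying a spectral projection $P_i$ of rank $k_i$ with $\sum_iP_i=I_k$ and $AP_i=P_iA$. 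The compressions $A_i=A|_{\mathrm{ran}\,P_i}$ then satisfy $\mathrm{sep}(A_i,A_j)\ge 1-4/N^3\ge 1/2$ for $i\ne j$.

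The first step is a division inequality. Cover $\Omega(A)$ by $N_0=\nu_\infty(\Omega(A),\omega)$ operator-norm $\omega$-balls and, in each ball meeting the orbit, choose a unitary $W_i$ with $W_i^*AW_i$ in that ball. If $\Phi(W)$ lies in the $i$-th ball then $\|W^*AW-W_i^*AW_i\|<2\omega$, which after conjugating by $W_i$ reads $\|U^*AU-A\|=\|UA-AU\|<2\omega$ for $U=WW_i^*$; hence $W\in\mathcal S_{2\omega}W_i$, where $\mathcal S_{2\omega}=\{U\in\mathcal U_k:\|UA-AU\|<2\omega\}$. As right multiplication by $W_i$ is an operator-norm isometry of $\mathcal U_k$, this gives $\mathcal U_k=\bigcup_i\mathcal S_{2\omega}W_i$ and therefore
\[
\nu_\infty(\mathcal U_k,\omega)\le N_0\cdot\nu_\infty(\mathcal S_{2\omega},\omega),\qquad\text{so}\qquad N_0\ge\frac{\nu_\infty(\mathcal U_k,\omega)}{\nu_\infty(\mathcal S_{2\omega},\omega)}.
\]
For the numerator I would invoke the lower bound $\nu_\infty(\mathcal U_k,\omega)\ge(1/\omega)^{k^2}$ from Lemma~\ref{lemma, covering number}.

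Next I would bound $\nu_\infty(\mathcal S_{2\omega},\omega)$ from above. For $U\in\mathcal S_{2\omega}$ and $i\ne j$, compressing the commutator yields the Sylvester identity $P_i(UA-AU)P_j=(P_iUP_j)A_j-A_i(P_iUP_j)$; since $\mathrm{sep}(A_i,A_j)\ge 1/2$ the associated Sylvester operator has inverse of norm at most $2$, so $\|P_iUP_j\|\le 2\|UA-AU\|<4\omega$. Thus, writing $U=U_{\mathrm d}+U_{\mathrm{off}}$ into its block-diagonal and off-diagonal parts, I cover the possible $U_{\mathrm d}$ and $U_{\mathrm{off}}$ separately to operator-norm precision $\omega/2$: covering $U_{\mathrm d}$ costs at most $(C_2/\omega)^{\sum_i k_i^2}$ balls (a product over $i$ of the bounds of Lemma~\ref{lemma, covering number} for $\mathcal U_{k_i}$, using that the norm of a block-diagonal matrix is the maximum of its blocks), while covering the off-diagonal box (each of its $k^2-\sum_i k_i^2$ complex dimensions confined to a ball of radius $4\omega$) by a product net costs at most $(C_3m)^{2(k^2-\sum_i k_i^2)}$ balls, the factor $m$ arising because passing from per-block to total operator norm loses a factor $\le m-1$. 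The grouping hypothesis then controls $\sum_i k_i^2$: within the $s$-th group the $m_s\ge N$ multiplicities all equal some $\kappa_s$, so $\kappa_s\le k/m_s\le k/N$ and $\sum_i k_i^2=\sum_s m_s\kappa_s^2\le (k/N)\sum_s m_s\kappa_s=k^2/N$. Substituting into the division inequality, the factor $\omega^{-\sum_i k_i^2}$ in the denominator cancels part of $\omega^{-k^2}$, and using $d:=k^2-\sum_i k_i^2$ with $\sum_i k_i^2\le k^2/N$ one is left with $N_0\ge\big((C_3m)^2\omega\big)^{-k^2}\big((C_3m)^2\omega/C_2\big)^{\sum_i k_i^2}$; absorbing the $m$-dependent constants into $C_1,C$ (which are thereby independent of $k,\omega$) and using the small-$\omega$ comparison puts this in the stated form $(8C_1\omega)^{-k^2}(2C/\omega)^{-50k^2/N}$, with $8,2,50$ chosen as slack.

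The main obstacle is the efficient covering of $\mathcal S_{2\omega}$. The delicate point is that operator-norm distance between two off-diagonal block matrices is not controlled by the individual blocks alone, so a naive product net loses a factor growing with the number $m$ of clusters; one must check that this loss enters only as a base-constant raised to a power proportional to the dimension (hence absorbable into $C_1$) and never multiplies the exponent of $\omega$. This is precisely why the constants $C_1,C$ are allowed to depend on $m$ but not on $k,\omega$, and why the conclusion is stated with the generous exponent $50/N$ rather than the sharp $1/N$ that a pure dimension count $\dim\Omega(A)=k^2-\sum_i k_i^2\ge k^2(1-1/N)$ would suggest.
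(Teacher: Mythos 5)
The paper itself contains no proof of this lemma: it is stated with only the remark that it can be proved ``using the idea in the proof of Lemma 5.4 in [Don-shen 2]'' (Hadwin--Shen), so there is no internal argument to compare against line by line. Your quotient argument is exactly the standard route behind that cited lemma: bound $\nu_{\infty}(\Omega(A),\omega)$ below by $\nu_{\infty}(\mathcal U_k,\omega)/\nu_{\infty}(\mathcal S_{2\omega},\omega)$, where $\mathcal S_{2\omega}$ is the set of unitaries almost commuting with $A$; use the spectral clusters of $A$ (Weyl perturbation plus the Sylvester/Rosenblum estimate $\|P_iUP_j\|\le 2\|UA-AU\|$) to show $\mathcal S_{2\omega}$ is essentially a constant-thickness neighborhood of the block-diagonal unitaries; and control the commutant dimension by $\sum_i k_i^2=\sum_s m_s\kappa_s^2\le k^2/N$. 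The division inequality, the commutator estimate, the dimension count, and your observation that the constants may depend on $m$ but not on $k,\omega$ (harmless, since in the application $m=\mathrm{Rank}(\mathcal B)$ is fixed while $k\to\infty$ and $\omega\to 0^+$) are all correct, and the generous exponent $50k^2/N$ indeed absorbs all the bookkeeping losses.

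One step needs repair. You cover the block-diagonal parts $U_{\mathrm d}$ by nets for the unitary groups $\mathcal U_{k_i}$, but $P_iUP_i$ is only a contraction, not a unitary, so a net for $\prod_i\mathcal U_{k_i}$ does not by itself cover the possible diagonal parts. Two fixes are available. (a) Since $\|P_iUP_j\|\le 4\omega$ for $j\ne i$, one computes $\|(P_iUP_i)(P_iUP_i)^*-P_i\|\le 16m\omega^2$ and similarly for $(P_iUP_i)^*(P_iUP_i)$, so each diagonal block lies within $16m\omega^2$ of a unitary; when $\omega\lesssim 1/m$ this error is below $\omega/4$ and your unitary nets suffice, while the regime of larger $\omega$ is handled trivially by enlarging $C_1$ so that the claimed lower bound is at most $1\le\nu_\infty(\Omega(A),\omega)$. (b) Alternatively, cover the unit ball of block-diagonal matrices, a set of real dimension $2\sum_ik_i^2$, using part (2) of Lemma \ref{lemma, covering number}; this doubles the exponent of $1/\omega$ from $\sum_ik_i^2$ to $2\sum_ik_i^2\le 2k^2/N$, which the slack $50k^2/N$ absorbs with room to spare, and it works uniformly in $\omega$. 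With either fix (and noting that your separation bound $1-4/N^3\ge 1/2$ requires $N\ge 2$, and that the statement only has content for small $\omega$ and large $N$ anyway, which is the regime used in Theorem 4.2), your proof is correct.
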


Now we are ready to prove the main theorem in this subsection.
\begin{theorem}\label{theorem, lower bound}
Let $\mathcal{A}$ be a unital separable approximately divisible
C$^{*}$-algebra generated by self-adjoint elements $x_1,\ldots,
x_n$. If $\mathcal  A$ has approximation property, then $$\delta
_{top}(x_1,\ldots, x_n)\geq 1.$$
\end{theorem}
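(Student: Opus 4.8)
The plan is to convert approximate divisibility into a single self-adjoint element whose microstates have a rigid, evenly spread spectrum, so that Lemma \ref{estimation,top} forces the unitary orbit of each microstate to have large covering number; the orbit estimate is then transported to $\delta_{top}(x_1,\dots,x_n)$ through parts (2) and (3) of Lemma \ref{lemma, x:y=x:y,z}. Since Theorem \ref{theorem, upper bound} already gives $\delta_{top}(x_1,\dots,x_n)\le 1$, it suffices to produce the matching lower bound. Fix a positive integer $N$. By Proposition \ref{proposition, ad form}(3) choose a unital finite-dimensional subalgebra $\mathcal B\cong \mathcal M_{\ell_1}(\mathbb C)\oplus\cdots\oplus \mathcal M_{\ell_r}(\mathbb C)\subseteq\mathcal A$ with $\mbox{SubRank}(\mathcal B)\ge N$, i.e. every $\ell_\iota\ge N$, with matrix units $\{e_{st}^{(\iota)}\}_{s,t,\iota}$, and set $m=\mbox{Rank}(\mathcal B)=\ell_1+\cdots+\ell_r$. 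Define the self-adjoint element $a=\sum_{\iota=1}^r\sum_{s=1}^{\ell_\iota}\lambda_s^{(\iota)}e_{ss}^{(\iota)}$, where the coefficients $\lambda_s^{(\iota)}$ run through $1,2,\dots,m$ with the $\iota$-th summand receiving a contiguous block of values. Since $a=a^*\in\mathcal A=C^*(x_1,\dots,x_n)$, I fix a self-adjoint polynomial $p$ with rational complex coefficients (an element of $\{p_m\}_{m=1}^\infty$) with $\|p(x_1,\dots,x_n)-a\|$ as small as needed, and I write $y_1,\dots,y_t$ for the real and imaginary parts of the $e_{st}^{(\iota)}$, noting that each $e_{ss}^{(\iota)}=\mbox{Re}(e_{ss}^{(\iota)})$ occurs among them.

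The core step is to show that the $p(x)$-coordinate of every joint microstate is, up to unitary conjugation, within $2/N^3$ of the diagonal matrix demanded by Lemma \ref{estimation,top}. Given a tuple $(A_1,\dots,A_n,\{B_{st}^{(\iota)}\}_{s,t,\iota},\{C_{st}^{(\iota)}\}_{s,t,\iota})\in\Gamma_{top}(x_1,\dots,x_n,\{\mbox{Re}(e_{st}^{(\iota)})\}_{s,t,\iota},\{\mbox{Im}(e_{st}^{(\iota)})\}_{s,t,\iota};k,\varepsilon,m')$ with $\varepsilon$ small and $m'$ large, Lemma \ref{lemma,almost matrix unit is matrix unit} supplies genuine matrix units $\{P_{st}^{(\iota)}\}_{s,t,\iota}$ for a copy of $\mathcal B$ in $\mathcal M_k(\mathbb C)$ close to $\{B_{st}^{(\iota)}+\sqrt{-1}\,C_{st}^{(\iota)}\}_{s,t,\iota}$. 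Then $\widehat a:=\sum_{\iota,s}\lambda_s^{(\iota)}P_{ss}^{(\iota)}$ is close to the microstate value $\sum_{\iota,s}\lambda_s^{(\iota)}B_{ss}^{(\iota)}$ of $a$, and since $\|p(x)-a\|$ is tiny the preserved-norm condition forces $\|p(A_1,\dots,A_n)-\widehat a\|\le 2/N^3$. The matrix $\widehat a$ is unitarily equivalent to $\mbox{diag}(1\cdot I_{c_1},\dots)$ whose eigenvalue multiplicities equal $\mbox{Rank}(P_{ss}^{(\iota)})=c_\iota$: these are constant within each of the $r$ groups, the group sizes are $\ell_1,\dots,\ell_r\ge N$, and $\sum_\iota c_\iota\ell_\iota=k$ by unitality. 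This is exactly the hypothesis of Lemma \ref{estimation,top}, so $\nu_\infty(\Omega(p(A_1,\dots,A_n)),\omega)\ge (8C_1\omega)^{-k^2}(2C/\omega)^{-50k^2/N}$.

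Conjugating a joint microstate by any $W\in\mathcal U_k$ yields another joint microstate, because operator norms of $*$-polynomials are conjugation invariant, and its $p(x)$-coordinate is $W^*p(A_1,\dots,A_n)W$; hence the whole orbit $\Omega(p(A_1,\dots,A_n))$ lies inside $\Gamma_{top}(p(x):x_1,\dots,x_n,\{\mbox{Re}(e_{st}^{(\iota)})\}_{s,t,\iota},\{\mbox{Im}(e_{st}^{(\iota)})\}_{s,t,\iota};k,\varepsilon,m')$. The approximation property makes this microstate set nonempty for infinitely many $k$ and for every $\varepsilon,m'$, so inserting the orbit bound into the definition of $\delta_{top}$ and letting $\omega\to 0^+$ yields $\delta_{top}(p(x):x_1,\dots,x_n,\{\mbox{Re}(e_{st}^{(\iota)})\}_{s,t,\iota},\{\mbox{Im}(e_{st}^{(\iota)})\}_{s,t,\iota})\ge 1-50/N$, the factor $(8C_1\omega)^{-k^2}$ contributing the $1$ and the factor $(2C/\omega)^{-50k^2/N}$ the $-50/N$. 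Lemma \ref{lemma, x:y=x:y,z}(2) identifies this quantity with $\delta_{top}(p(x):x_1,\dots,x_n)$, and Lemma \ref{lemma, x:y=x:y,z}(3) gives $\delta_{top}(x_1,\dots,x_n)\ge\delta_{top}(p(x):x_1,\dots,x_n)\ge 1-50/N$. Letting $N\to\infty$ over the choices above gives $\delta_{top}(x_1,\dots,x_n)\ge 1$.

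I expect the main obstacle to be the uniform $2/N^3$ estimate of the core step: one must order the choices of $\|p(x)-a\|$, of $\varepsilon$, and of $m'$ so that Lemma \ref{lemma,almost matrix unit is matrix unit} applies and the combined error from passing from almost matrix units to genuine matrix units and from $p(x)$ to $a$ stays below $2/N^3$, while the multiplicity pattern of $\widehat a$ matches Lemma \ref{estimation,top} exactly and the estimate survives passing to smaller $\varepsilon$ and larger $m'$ (as required by the infimum in the definition of $\delta_{top}$). Routing the conclusion through Lemma \ref{lemma, x:y=x:y,z}(2)--(3), rather than through an equality of the form $\delta_{top}(x,y)=\delta_{top}(x)$, is what keeps the whole argument within the results already established.
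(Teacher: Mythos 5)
Your proposal is correct and takes essentially the same route as the paper's proof: your element $a$ is exactly the paper's $z_N$ (contiguous integer eigenvalues on the diagonal matrix units of a unital subalgebra with $\mbox{SubRank}\ge N$), and the chain Lemma 4.2 (almost matrix units $\to$ genuine matrix units, with error budget $2/N^3$) $\to$ Lemma 4.5 (unitary-orbit covering estimate) $\to$ Lemma 4.1 parts (2) and (3) $\to$ the bound $1-50/N$ with $N\to\infty$ is precisely the paper's argument. The only differences are presentational: you carry $p(A_1,\ldots,A_n)$ as the first coordinate instead of a separate microstate coordinate $B$ for $p_{m_N}(x_1,\ldots,x_n)$, and you make explicit the orbit-containment and nonemptiness (approximation property) steps that the paper leaves implicit.
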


\begin{proof} For any positive integer $N$, by part (3) of Proposition
\ref{proposition, ad form}, there is a finite-dimensional
C$^*$-subalgebra $\mathcal  B$ containing the unit of $\mathcal  A$
with $\mbox{SubRank}({\mathcal  B})\geq N$. Therefore there are
positive integers $r, k_1,\ldots, k_r$ such that
$$
\mathcal B\simeq \mathcal M_{k_1}(\Bbb C) \oplus \cdots \oplus \mathcal M_{k_r}(\Bbb C).
$$
Let $\{e_{st}^{(\iota)}: 1\leq
\iota\leq r, 1\leq s,t\leq k_{\iota}\}$ be a system of matrix units for
$\mathcal  B$. Let
$$z_N=\sum_{\iota=1}^{r}\sum_{s=1}^{k_{\iota}}\left (s+\sum_{j=1}^{\iota-1}k_j \right )\cdot
e_{ss}^{(\iota)}.$$  Note that $\{p_m(x_1, \ldots,
x_n)\}_{m=1}^{\infty}$ is a norm-dense set in $\mathcal  A$. There exists a
polynomial $p_{m_N}\in\{p_m\}_{m=1}^{\infty}$ such that
$p_{m_N}(x_1,\ldots, x_n)$ is self-adjoint and
$\|p_{m_N}(x_1,\ldots, x_n)-z_N\|\leq \frac{1}{N^3}$.

For sufficiently small $\varepsilon>0$, sufficiently large positive
integers $m$ and $k$, if $$\begin{aligned} & (B,A_1,\ldots,
A_n,\{C_{st}^{(\iota)}\}_{s,t,\iota},
\{D_{st}^{(\iota)}\}_{s,t,\iota})\\&\qquad \qquad \qquad\in
 \Gamma_{top}(p_{m_N}(x_1,\ldots,x_n),
x_1,\ldots,x_n,\{\mbox{Re}(e_{st}^{(\iota)})\}_{s,t,\iota},\{\mbox{Im}(e_{st}^{(\iota)})\}_{s,t,\iota};k,\varepsilon,
m),\end{aligned}$$ then, by Lemma \ref{lemma,almost matrix unit is
matrix unit}, there exists a set $\{P_{st}^{(\iota)}:1\leq s,t\leq
k_{\iota}, 1\leq \iota\leq r\}$ of matrix units for a copy of
$\mathcal B$ in ${\mathcal  M}_k({\Bbb C})$, such that
$$\|B-\sum_{\iota=1}^{r}\sum_{s=1}^{k_{\iota}}\left (s+\sum_{j=1}^{\iota-1}k_j \right )\cdot
P_{ss}^{(\iota)}\|\leq \frac{2}{N^3}.$$

Let $U$ be a unitary matrix in ${\mathcal  M}_k({\Bbb C})$ such that, for
any $1\leq s\leq k_{\iota}$ and $1\leq \iota\leq r$,
$U^*P_{ss}^{(\iota)}U(=Q_{ss}^{(\iota)})$ is diagonal. Then, from
the preceding inequality,
$$\|B-U\left(\sum_{\iota=1}^{r}\sum_{s=1}^{k_{\iota}}\left (s+\sum_{j=1}^{\iota-1}k_j \right )\cdot
Q_{ss}^{(\iota)}\right)U^*\|\leq \frac{2}{N^3} .$$

From Lemma \ref{estimation,top}, for any $\omega>0$, when $m$ is
large enough and $\varepsilon$ is small enough, there are some
constants $C, C_1>1$ independent of $k$ and $\omega$, such that
$$\begin{aligned} &\nu_{\infty}(\Gamma_{top}(p_{m_N}(x_1,\ldots,x_n):x_1,\ldots,x_n,
\{\mbox{Re}(e_{st}^{(\iota)})\}_{s,t,\iota},
\{\mbox{Im}(e_{st}^{(\iota)})\}_{s,t,\iota};k,\varepsilon, m),
\omega)\\
&\qquad \qquad \qquad \qquad \qquad \geq
(8C_1\omega)^{-k^2}\left(\frac{2C}{\omega}\right)^{\frac{-50k^2}{N}}.\end{aligned}$$
Therefore
$$\delta_{top}(p_{m_N}(x_1,\ldots,x_n):x_1,\ldots, x_n,\{\mbox{Re}(e_{st}^{(\iota)})\}_{s,t,\iota},
\{\mbox{Im}(e_{st}^{(\iota)})\}_{s,t,\iota})\geq 1-\frac{50}{N}.$$
By Lemma \ref{lemma, x:y=x:y,z},
\begin{eqnarray*}&&\delta_{top}(p_{m_N}(x_1,\ldots, x_n): x_1,\ldots,
x_n,\{\mbox{Re}(e_{st}^{(\iota)})\}_{s,t,\iota},\{\mbox{Im}(e_{st}^{(\iota)})\}_{s,t,\iota}
)\\
& =& \delta_{top}(p_{m_N}(x_1,\ldots,x_n): x_1,\ldots, x_n)\\
&\leq & \delta_{top}(x_1,\ldots, x_n),\end{eqnarray*} whence
$\delta_{top}(x_1,\ldots,x_n)\geq 1-\frac{50}{N}$.
 Since $N$ is
an arbitrary positive integer, we obtain
$$\delta_{top}(x_1,\ldots,x_n)\geq 1.$$\end{proof}

Combining Theorem 3.1, Theorem 4.1 and Theorem 4.2, we have the following result.
\begin{theorem}\label{theorem, lower bound}
Let $\mathcal{A}$ be a unital separable approximately divisible
C$^{*}$-algebra. If $\mathcal  A$ has approximation property, then $$\delta
_{top}(x_1,\ldots, x_n)= 1,$$ where $x_1,\ldots,x_n$ is any family of self-adjoint generators of $\mathcal A$.
\end{theorem}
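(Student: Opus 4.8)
The plan is to read this statement as the synthesis of the three principal results already established, so that no new analytic work is required. First I would invoke Theorem \ref{theorem, singly generated} to guarantee that $\mathcal{A}$ admits finite self-adjoint generating families at all: since $\mathcal{A}$ is singly generated, finite families $x_1,\ldots,x_n$ of self-adjoint generators exist, and in particular Voiculescu's invariant $\delta_{top}$ is defined on $\mathcal{A}$ (recall it is introduced only for finitely generated C$^*$-algebras). This makes the assertion non-vacuous and the symbol $\delta_{top}(x_1,\ldots,x_n)$ meaningful for every finite self-adjoint generating set.

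Next I would fix an arbitrary such family $x_1,\ldots,x_n$ of self-adjoint generators and apply the two bounds. The hypotheses of Theorem \ref{theorem, upper bound} ask precisely that $\mathcal{A}$ be a unital separable approximately divisible C$^*$-algebra generated by the self-adjoint elements $x_1,\ldots,x_n$, which holds for our chosen family; hence $\delta_{top}(x_1,\ldots,x_n)\leq 1$. The lower-bound theorem immediately preceding carries the same hypotheses together with the approximation property, which is assumed here; hence $\delta_{top}(x_1,\ldots,x_n)\geq 1$. Combining the two inequalities gives $\delta_{top}(x_1,\ldots,x_n)=1$, and since the family was an arbitrary finite set of self-adjoint generators, the equality holds for every such set.

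The only point that deserves care --- and the nearest thing to an obstacle --- is confirming that the upper and lower estimates were each proved for an \emph{arbitrary} self-adjoint generating family rather than for one distinguished family: I would check that the constructions in the proof of Theorem \ref{theorem, upper bound} and of the lower-bound theorem (together with the covering estimate of Lemma \ref{estimation,top}) never exploit special features of a particular set of generators, so that both conclusions transfer verbatim to any such family. I would also note that the approximation property enters only through the lower bound, the upper bound being unconditional, so the hypothesis in the statement is used exactly where it is needed. Granting this, the proof is simply the concatenation of the three cited theorems, with no further computation.
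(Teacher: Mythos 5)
Your proposal is correct and is essentially identical to the paper's own proof, which is the single sentence ``Combining Theorem 3.1, Theorem 4.1 and Theorem 4.2, we have the following result'' --- i.e., single generation supplies finite self-adjoint generating families, the upper-bound theorem gives $\delta_{top}(x_1,\ldots,x_n)\leq 1$ for any such family, and the lower-bound theorem (using the approximation property) gives $\delta_{top}(x_1,\ldots,x_n)\geq 1$. Your added check that both bounds were proved for an arbitrary generating family, and that the approximation property enters only in the lower bound, is sound and is left implicit in the paper.
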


Using the similar idea in the proof of Theorem \ref{theorem, lower
bound}, we can have the following generalized theorem.

\begin{theorem} Let $\mathcal  A$ be a unital separable C$^*$-algebra generated by self-adjoint elements $x_1,\ldots, x_n$. Suppose,
for any positive integer $N$, there is a finite-dimensional
subalgebra in $\mathcal  A$ containing the unit of $\mathcal  A$ with subrank
at lest $N$. If $\mathcal  A$ has approximation property, then
$\delta_{top}(x_1,\ldots, x_n)= 1$.
\end{theorem}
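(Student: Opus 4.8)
The plan is to establish the two inequalities $\delta_{top}(x_1,\ldots,x_n)\ge 1$ and $\delta_{top}(x_1,\ldots,x_n)\le 1$ separately, tracking carefully which structural input each direction consumes. Neither Theorem \ref{theorem, upper bound} nor Theorem \ref{theorem, lower bound} can be quoted verbatim, since both were proved for approximately divisible algebras, whereas the present hypothesis only furnishes, for each $N$, a unital finite-dimensional subalgebra of subrank at least $N$, with no a priori control on how it interacts with the generators. So the first task is to reopen those two proofs and check exactly what they use.

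For the lower bound I would replay the argument of the lower-bound theorem (Theorem \ref{theorem, lower bound}) almost verbatim, because that proof never actually invokes approximate divisibility, only the existence of large-subrank subalgebras and the approximation property. Concretely: given $N$, take by hypothesis a unital $\mathcal B\cong \mathcal M_{k_1}(\Bbb C)\oplus\cdots\oplus \mathcal M_{k_r}(\Bbb C)$ with $\mbox{SubRank}(\mathcal B)\ge N$ and matrix units $\{e_{st}^{(\iota)}\}$, and set $z_N=\sum_{\iota,s}(s+\sum_{j<\iota}k_j)\,e_{ss}^{(\iota)}$. Since $x_1,\ldots,x_n$ generate $\mathcal A$, the set $\{p_m(x_1,\ldots,x_n)\}$ is norm-dense, so I may choose a self-adjoint $p_{m_N}(x)$ with $\|p_{m_N}(x)-z_N\|\le N^{-3}$. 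The approximation property guarantees that for infinitely many $k$ the relative microstate set
$$\Gamma_{top}(p_{m_N}(x):x_1,\ldots,x_n,\{\mbox{Re}(e_{st}^{(\iota)})\},\{\mbox{Im}(e_{st}^{(\iota)})\};k,\varepsilon,m)$$
is nonempty; combined with Lemma \ref{lemma,almost matrix unit is matrix unit}, every such microstate yields an exact copy $\{P_{st}^{(\iota)}\}$ of $\mathcal B$ and a matrix $B$ with $\|B-U(\mathrm{diag})U^{*}\|\le 2N^{-3}$. As the defining norm-conditions are unitarily invariant, the whole unitary orbit of $B$ lies in the microstate set, and feeding this orbit into the estimation Lemma \ref{estimation,top} gives $\nu_{\infty}\ge (8C_1\omega)^{-k^2}(2C/\omega)^{-50k^2/N}$. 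Hence $\delta_{top}(p_{m_N}(x):x_1,\ldots,x_n)\ge 1-\frac{50}{N}$, and Lemma \ref{lemma, x:y=x:y,z} upgrades this to $\delta_{top}(x_1,\ldots,x_n)\ge 1-\frac{50}{N}$. Letting $N\to\infty$ yields $\delta_{top}(x_1,\ldots,x_n)\ge 1$.

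For the upper bound I would follow the scheme of Theorem \ref{theorem, upper bound}: pass to a microstate, use Lemma \ref{lemma,almost matrix unit is matrix unit} to replace the matrix-unit coordinates by an exact copy $\{P_{st}^{(\iota)}\}$ of $\mathcal B$ in $\mathcal M_k(\Bbb C)$, diagonalize the projections $P_{ss}^{(\iota)}$ by a single unitary $U$, and count: the microstate space is covered by $\mathcal U_k$ (contributing $(9\pi e/\omega)^{k^2}$ by Lemma \ref{lemma, covering number}) times a block-diagonal ball of real dimension $\sum_{\iota}c_\iota^2 k_\iota\le k^2/N$ (contributing $(12R/\omega)^{nk^2/N}$, with $R=\max_j\|x_j\|$) times the $(k/N)^r$ choices of block multiplicities, giving $\delta_{top}\le 1+\frac{n}{N}\to 1$. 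The ingredient that is genuinely not automatic here — and the step I expect to be the main obstacle — is the compression estimate of Lemma \ref{lemma, less than omega}, namely
$$\Big\|A_j-\sum_{\iota,s}P_{ss}^{(\iota)}A_jP_{ss}^{(\iota)}\Big\|\le 2\omega .$$
In the divisible setting this came for free from approximate centrality, $\|x_j-\sum e_{ss}^{(\iota)}x_je_{ss}^{(\iota)}\|<\omega$; under the bare hypothesis that only large-subrank subalgebras exist, the generators need not be even approximately block-diagonal with respect to $\mathcal B$, so this estimate must be recovered from the data at hand rather than assumed. The crux is therefore to use the approximation property to transport the finite-dimensional block decomposition of $\mathcal B$ into the matricial microstates, and to choose $\mathcal B$ together with the polynomial data fed to $\Gamma_{top}$ so that every sufficiently fine microstate of $(x_1,\ldots,x_n,\{e_{st}^{(\iota)}\})$ has its $A_j$-part forced onto the diagonal blocks up to $2\omega$. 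Making this compression quantitative and uniform in $k$ is where essentially all of the work lies; once it is in hand the dimension count above closes the argument, and together with the lower bound it gives $\delta_{top}(x_1,\ldots,x_n)=1$.
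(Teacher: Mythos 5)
Your lower-bound half is correct and is, in substance, exactly the paper's proof. The paper's entire ``proof'' of this theorem is the one-sentence remark that the earlier arguments carry over, and for the direction $\delta_{top}(x_1,\ldots,x_n)\geq 1$ they genuinely do: the proof of Theorem 4.2 consumes only (i) a unital finite-dimensional subalgebra $\mathcal{B}$ with $\mbox{SubRank}(\mathcal{B})\geq N$, (ii) norm-density of $\{p_m(x_1,\ldots,x_n)\}_{m=1}^\infty$ in $\mathcal{A}$, and (iii) nonemptiness of the relative microstate sets, supplied by the approximation property. Your replay --- $z_N$, the polynomial $p_{m_N}$, Lemma 4.2 to rigidify almost-matrix-units, the unitary-orbit count of Lemma 4.5, and the reduction via Lemma 4.1 --- is the paper's argument verbatim, and it uses nothing beyond the stated hypotheses.

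The upper-bound half is where your proposal stops being a proof, and the step you defer is not merely laborious but unobtainable by the route you sketch. The defining conditions of $\Gamma_{top}$ are two-sided norm conditions $\big|\,\|p_j(A_1,\ldots,A_n)\|-\|p_j(x_1,\ldots,x_n)\|\,\big|<\varepsilon$, so they can only transfer to microstates inequalities that already hold in $\mathcal{A}$: to force $\|A_j-\sum_{\iota,s}P_{ss}^{(\iota)}A_jP_{ss}^{(\iota)}\|\leq 2\omega$ on every sufficiently fine microstate, you would need $\|x_j-\sum_{\iota,s}e_{ss}^{(\iota)}x_je_{ss}^{(\iota)}\|<\omega$ to hold in $\mathcal{A}$ itself for a suitable choice of $\mathcal{B}$ --- and that is precisely the approximate-centrality consequence of approximate divisibility, which is how Lemma 4.4 actually gets it (Proposition 2.1(3) places $\mathcal{B}$ inside $\mathcal{A}_m'\cap\mathcal{A}$ with $y_j\in\mathcal{A}_m$, $\|x_j-y_j\|<\omega/2$). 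The bare hypothesis ``unital finite-dimensional subalgebra of subrank at least $N$'' provides no such $\mathcal{B}$, and the approximation property cannot repair this: it asserts only that microstate sets are nonempty and creates no new relations, so ``choosing the polynomial data fed to $\Gamma_{top}$'' cannot force block-diagonality that fails in $\mathcal{A}$. (By contrast, the ``transport'' issue you flag as a second obstacle is a non-issue: since the $e_{st}^{(\iota)}$ lie in $\mathcal{A}=C^*(x_1,\ldots,x_n)$, Lemma 4.1(1) and Lemma 4.2 apply verbatim, with no approximation property needed for that step.) You should also be aware that the paper shares this gap: its one-line proof tacitly reuses the relative-commutant placement of $\mathcal{B}$ from the proof of Theorem 4.1, which the weakened hypothesis no longer licenses. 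As literally stated, the hypotheses support only $\delta_{top}(x_1,\ldots,x_n)\geq 1$; equality requires either reading the hypothesis as asserting that the large-subrank subalgebras can be chosen to approximately commute with any prescribed finite subset of $\mathcal{A}$ (which is essentially approximate divisibility again), or a genuinely new upper-bound argument, which neither you nor the paper supplies.
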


\section{Similarity degree}

In 1955, R. Kadison \cite{Kadison1} formulated the following
conjecture: Let $\mathcal  A$ be a unital C$^*$-algebra and let
$\pi: {\mathcal  A}\rightarrow {\mathcal  B(\mathcal H)}$ ($\mathcal
H$ is a Hilbert space) be a unital bounded homomorphism. Then $\pi$
is similar to a
*-homomorphism, that is, there exists an invertible operator
$S\in{\mathcal  B(\mathcal H)}$ such that $S^{-1}\pi(\cdot)S$ is a
*-homomorphism.

This conjecture remains unproved, although many partial results are
known. U. Haagerup \cite{Haagerup 1} proved that $\pi$ is similar to
a *-homomorphism if and only if it is completely bounded. Moreover,
$$\|\pi\|_{cb}=\inf\{\|S\|\cdot \|S^{-1}\|\}$$ where the infimum runs over
all invertible $S$ such that $S^{-1}\pi(\cdot)S$ is a
*-homomorphism. By definition, $\|\pi\|_{cb}=\sup_{n\geq
1}\|\pi_n\|$ where $\pi_n: {\mathcal  M}_n({\mathcal  A})\rightarrow
{\mathcal M}_n({\mathcal B(\mathcal H)})$ is the mapping taking $n$
by $n$ matrix $[a_{ij}]_{n\times n}$ to matrix
$[\pi(a_{ij})]_{n\times n}$.

G. Pisier \cite{Pisier 1} proved that if a unital C$^*$-algebra
$\mathcal  A$ verifies Kadison's conjecture, then there is a number
$d$ for which there exists a constant $K$ so that any bounded
homomorphism $\pi: {\mathcal  A}\rightarrow {\mathcal B(\mathcal
H)}$ satisfies $\|\pi\|_{cb}\leq K\|\pi\|^d$. Moreover, the smallest
number $d$ with the property is an integer denoted by $d({\mathcal
A})$ and called {\it similarity degree}. It is clear that a
C$^*$-algebra $\mathcal  A$ verifies Kadison's conjecture if and
only if $d({\mathcal  A})<\infty.$

\begin{remark}\label{remark, one-to-one}When determining $d({\mathcal  A})$, it is only necessary to
consider unital bounded homomorphisms that are one-to-one. To see
this, let $\pi_0$ be a unital *-isomorphism from $\mathcal  A$ to
$\mathcal B(\mathcal K)$ for some Hilbert space $\mathcal  K$. It is
not difficult to see that $\pi\oplus\pi_0$ is one-to-one,
$\|\pi\oplus \pi_0\|=\|\pi\|$ and $\|\pi\oplus
\pi_0\|_{cb}=\|\pi\|_{cb}.$\end{remark}

We will show that the similarity degree of every unital separable
approximately divisible C$^*$-algebra is at most 5. To do that, we
need the following lemma.

\begin{lemma}\label{lemma,  sd, estimation} Let $\mathcal  A$ be a C$^*$-algebra with the unit $I_{\mathcal  A}$,
 ${\mathcal  A}_0$ and $\mathcal  B$ be commuting C$^*$-subalgebras of $\mathcal  A$ that contain $I_{\mathcal  A}$.
Suppose ${\mathcal  B}\cong {\mathcal  M}_{k_1}(\Bbb C)\oplus\cdots\oplus{\mathcal
M}_{k_r}(\Bbb C)$ with $k_1,\ldots,k_r\geq n\geq 2$ for some
positive integer $n$, and $\{e_{ij}^{(s)}: 1\leq i,j\leq k_s, 1\leq
s\leq r\}$ is a set of matrix units for ${\mathcal  B}$. If $\{a_{ij}:
1\leq i,j\leq n\}\subseteq {\mathcal  A}_0$, then
$$\|\sum_{1\leq s\leq r}\sum_{1\leq i,j\leq n}a_{ij}e_{ij}^{(s)}\|=
\|[a_{ij}]_{n\times n}\|.$$\end{lemma}

\begin{proof} Let $p_1=I_{k_1}\oplus\cdots 0\oplus 0,\ldots,
p_r=0\oplus\cdots\oplus 0\oplus I_{k_r}$ be the projections in $\mathcal
B$, where $I_{k_s}$ is the unit of ${\mathcal  M}_{k_s}({\Bbb C})$
($1\leq s\leq r$). Then it is clear that $p_1+\cdots +p_r=I_{\mathcal
A}$ and for any $1\leq s\leq r$, $1\leq i,j\leq k_s$,
$e_{ij}^{(s)}=p_se_{ij}^{(s)}$.

Define
$$\pi: {\mathcal  M}_{k_1}(p_1{\mathcal
A}_{0})\oplus\cdots\oplus{\mathcal  M}_{k_r}(p_r{\mathcal
A}_{0})\rightarrow C^*({\mathcal  A}_{0}, {\mathcal  B})$$ by
$$\pi([p_1a_{ij}^{(1)}]_{k_1\times k_1}\oplus \cdots \oplus
[p_ra_{ij}^{(r)}]_{k_r\times
k_r})=\sum_{s=1}^r\sum_{i,j=1}^{k_s}a_{ij}^{(s)}e_{ij}^{(s)},$$ for
any $a_{ij}^{(s)}\in {\mathcal A}_0$. It is clear that $\pi$ is a
*-isomorphism.

Thus, in ${\mathcal  M}_n({\mathcal  A})$,
\begin{eqnarray*}\|[a_{ij}]_{n\times n}\|&=&\|\sum_{s=1}^r\left(\begin{array}{lll}p_s&&0\\
&\ddots&\\
0&&p_s\end{array}\right)[a_{ij}]_{n\times n}\|\\
&=&\max\{\|[p_sa_{ij}]_{n\times n}\|: 1\leq s\leq
n\}.\end{eqnarray*}

On the other hand,
\begin{eqnarray*}&&\|\sum_{1\leq s\leq
r}\sum_{1\leq i,j\leq n}a_{ij}e_{ij}^{(s)}\|\\
&=& \|\pi\left(\left(\begin{array}{ll}[p_1a_{ij}]_{n\times n}& 0\\
0&0\end{array}\right)\oplus \cdots \oplus \left(\begin{array}{ll}[p_ra_{ij}]_{n\times n}& 0\\
0&0\end{array}\right)\right)\|\\
&=&\max\{\|[p_sa_{ij}]_{n\times n}\|: 1\leq s\leq r\}.
\end{eqnarray*}\end{proof}

\begin{theorem}\label{theorem, similarity degree} If
$\mathcal  A$ is a unital separable approximately divisible
C$^*$-algebra, then $$d({\mathcal  A})\leq 5.$$
\end{theorem}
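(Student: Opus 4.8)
The plan is to bound the completely bounded norm $\|\pi\|_{cb}$ of an arbitrary unital bounded homomorphism $\pi:\mathcal A\to\mathcal B(\mathcal H)$ by $K\|\pi\|^{5}$ for a universal constant $K$; by Pisier's theorem this gives $d(\mathcal A)\le 5$, and (via Haagerup's criterion together with Remark \ref{remark, one-to-one}) an affirmative answer to Kadison's problem. Writing $c=\|\pi\|\ge 1$, it suffices to show $\|[\pi(a_{ij})]_{n\times n}\|\le K c^{5}\,\|[a_{ij}]_{n\times n}\|$ for every $n$ and every $[a_{ij}]\in\mathcal M_n(\mathcal A)$, since $\|\pi\|_{cb}=\sup_n\|\pi_n\|$.

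Fix $[a_{ij}]$ with $n\ge 2$. First I would use Proposition \ref{proposition, ad form}(3) together with density of $\cup_m\mathcal A_m$ to replace the entries by nearby elements: choose $m$ and $a_{ij}'\in\mathcal A_m$ with $\|a_{ij}-a_{ij}'\|$ as small as desired, and a finite-dimensional $\mathcal B\subseteq\mathcal A_m'\cap\mathcal A$ with $I_{\mathcal A}\in\mathcal B$ and $\mbox{SubRank}(\mathcal B)\ge n$; say $\mathcal B\cong\bigoplus_{s=1}^r\mathcal M_{k_s}(\mathbb C)$ with all $k_s\ge n$ and matrix units $\{e_{ij}^{(s)}\}$. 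Since $\mathcal A_0:=C^*(I_{\mathcal A},\{a_{ij}'\})\subseteq\mathcal A_m$ commutes with $\mathcal B$, Lemma \ref{lemma, sd, estimation} gives, for $x:=\sum_{s}\sum_{i,j=1}^n a_{ij}'e_{ij}^{(s)}\in\mathcal A$, the norm identity $\|x\|=\|[a_{ij}']_{n\times n}\|$.

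The heart of the argument is to run the same computation after applying $\pi$. Since $\mathcal B$ is finite-dimensional, hence nuclear, $\pi|_{\mathcal B}$ is similar to a $*$-representation: by Haagerup's theorem there is an invertible $S$ with $\rho(\cdot):=S^{-1}\pi(\cdot)S$ a $*$-homomorphism on $\mathcal B$ and $\|S\|\,\|S^{-1}\|\le\|\pi|_{\mathcal B}\|_{cb}\le K_0 c^{2}$ (using $d(\mathcal B)=2$). Set $f_{ij}^{(s)}:=\rho(e_{ij}^{(s)})$, a genuine system of matrix units with $\sum_{s}\sum_{l}f_{ll}^{(s)}=\rho(I_{\mathcal A})=I$, and $b_{ij}:=S^{-1}\pi(a_{ij}')S$. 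Because $\pi$ is an algebra homomorphism and $a_{ij}'$ commutes with every $e_{kl}^{(s)}$, each $b_{ij}$ commutes with every $f_{kl}^{(s)}$. Decomposing $\mathcal H=\bigoplus_{s,l}f_{ll}^{(s)}\mathcal H$ (each summand invariant under the $b_{ij}$) and using that within a fixed block all corners $f_{ll}^{(s)}\mathcal H$ are unitarily identified by the $f^{(s)}$, the block-diagonal computation underlying Lemma \ref{lemma, sd, estimation} yields the non-$*$ analogue $\big\|\sum_{s}\sum_{i,j=1}^n b_{ij}f_{ij}^{(s)}\big\|=\|[b_{ij}]_{n\times n}\|$; here the left side equals $\|S^{-1}\pi(x)S\|\le\|S\|\,\|S^{-1}\|\,\|\pi(x)\|\le\|S\|\,\|S^{-1}\|\,c\,\|x\|$.

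Assembling the estimates and amplifying the similarity once more gives the exponent: $\|[\pi(a_{ij}')]\|=\|[S\,b_{ij}\,S^{-1}]\|\le\|S\|\,\|S^{-1}\|\,\|[b_{ij}]\|\le(\|S\|\,\|S^{-1}\|)^{2}\,c\,\|x\|\le K_0^{2}\,c^{5}\,\|[a_{ij}']\|$. Letting $\|a_{ij}-a_{ij}'\|\to 0$ (which sends $\|[a_{ij}']\|\to\|[a_{ij}]\|$ and $[\pi(a_{ij}')]\to[\pi(a_{ij})]$ in norm) yields $\|[\pi(a_{ij})]\|\le K_0^{2}c^{5}\|[a_{ij}]\|$, whence $\|\pi\|_{cb}\le K_0^{2}\|\pi\|^{5}$ and $d(\mathcal A)\le 5$. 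The main obstacle, and the point where approximate divisibility is essential, is the transfer of the Lemma \ref{lemma, sd, estimation} identity past the non-$*$ map $\pi$: it works precisely because the almost-central finite-dimensional $\mathcal B$ can be taken genuinely central relative to the approximants $a_{ij}'$ (so commutation is preserved by $\pi$), and because after the single correcting similarity $S$ the $\{f_{ij}^{(s)}\}$ are exact matrix units whose diagonal sums to the identity. The arithmetic $5=2\cdot 2+1$ records conjugating by $S$ twice and paying one factor $\|\pi\|$ for $\pi(x)$.
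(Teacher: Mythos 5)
Your proposal is correct and follows essentially the same route as the paper's own proof: approximate the entries by elements of some $\mathcal A_m$, take a commuting finite-dimensional $\mathcal B$ with $\mbox{SubRank}(\mathcal B)\geq n$, use Haagerup's theorem with $d(\mathcal B)=2$ to correct $\pi|_{\mathcal B}$ to a $*$-homomorphism at cost $\|S\|\|S^{-1}\|\leq C\|\pi\|^{2}$, and transfer the norm identity of Lemma \ref{lemma, sd, estimation} through $\rho=S^{-1}\pi S$ to get the exponent $5=2\cdot 2+1$. Your explicit block-decomposition justification of the ``non-$*$'' identity for the $b_{ij}$ is exactly what the paper accomplishes by applying Lemma \ref{lemma, sd, estimation} to $\rho(\mathcal B)$ and the commutant C$^*$-algebra containing the $\rho(a_{ij})$.
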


\begin{proof} Let ${\mathcal  A}=\overline{\cup_m{\mathcal  A}_m}^{\|\cdot\|}$ with ${\mathcal  A}_m$
defined in Proposition \ref{proposition, ad form}. By Remark
\ref{remark, one-to-one}, let $\pi: {\mathcal  A}\rightarrow
{\mathcal  B(\mathcal H)}$ be a one-to-one unital bounded
homomorphism, where $\mathcal  H$ is a Hilbert space. It is
sufficient to prove that
$$\|\pi|_{\cup_m{\mathcal  A}_m}\|_{cb}\leq K\|\pi\|^5$$ for some
constant $K$.

 For any positive integer $n$,
let $\{a_{ij}: 1\leq i,j\leq n\}$ be a family of elements in
$\cup_m{\mathcal  A}_m$. Then there exists some positive integer
$m_0$ such that $\{a_{ij}: 1\leq i,j\leq n\}$ is in ${\mathcal
A}_{m_0}$. From Proposition \ref{proposition, ad form}, there exists
a finite-dimensional C$^*$-subalgebra ${\mathcal  B}$ containing the
unit of $\mathcal  A$ with $\mbox{SubRank}({\mathcal  B})\geq n$ and
${\mathcal B}\subset {\mathcal  A}_{m_0}'\cap {\mathcal  A}$. Let
$\{e_{ij}^{(s)}: 1\leq i,j\leq k_s, 1\leq s\leq r\}$ be a set of
matrix units for ${\mathcal B}$.

Since $\mathcal  B$ is finite-dimensional, it follows that $\mathcal
B$ is nuclear. Therefore, from \cite{Haagerup 1}, there exists an
invertible operator $S$ in $\mathcal  B(\mathcal H)$, such that
$\|S\|\cdot \|S^{-1}\|\leq C\|\pi\|^2$ for some constant $C$, and
$S^{-1}\pi|_{\mathcal  B}S$ is a $^*$-isomorphism. Let
$\rho=S^{-1}\pi S$. Then $\{\rho(e_{ij}^{(s)}): 1\leq i,j\leq k_s,
1\leq s\leq r\}$ is a set of matrix units for the C$^*$-algebra
$\rho({\mathcal  B})$. Hence, by Lemma \ref{lemma, sd, estimation},
$$\|\rho (\sum_{1\leq s\leq r}\sum_{1\leq i,j\leq n}a_{ij}e_{ij}^{(s)})\|
\leq \|\rho\|\cdot \|\sum_{1\leq s\leq r}\sum_{1\leq i,j\leq
n}a_{ij}e_{ij}^{(s)}\|=\|\rho\|\cdot \|[a_{ij}]_{n\times n}\|.$$

On the other hand, by Lemma \ref{lemma, sd, estimation},
\begin{eqnarray*}&&\|\rho (\sum_{1\leq s\leq r}\sum_{1\leq i,j\leq n
}a_{ij}e_{ij}^{(s)})\|\\
&=& \|\sum_{s=1}^r\sum_{1\leq i,j\leq n}\rho(a_{ij})\rho(e_{ij}^{(s)})\|\\
&=&\|[\rho(a_{ij})]_{n\times n}\|.
\end{eqnarray*}
Therefore we get $$\|[\rho(a_{ij})]_{n\times n}\|\leq \|\rho\|\cdot
\|[a_{ij}]_{n\times n}\|\leq \|S\|\cdot \|S^{-1}\|\cdot \|\pi\|\cdot
\|[a_{ij}]_{n\times n}\|\leq C\|\pi\|^3\|[a_{ij}]_{n\times n}\|,$$
which implies that $\|\rho|_{\cup_m{\mathcal  A}_m}\|_{cb}\leq
C\|\pi\|^3$, then
$$\|\pi|_{\cup_m{\mathcal  A}_m}\|_{cb}=\|S\rho|_{\cup_m{\mathcal  A}_m} S^{-1}\|_{cb}\leq \|S^{-1}\|\cdot \|S\|\cdot
\|\rho|_{\cup_m{\mathcal  A}_m}\|_{cb}\leq C^2\|\pi\|^5.$$\end{proof}

F. Pop \cite{F. Pop} proved that if $\mathcal  A$ is a unital
C$^*$-algebra, $\mathcal  B$ is a unital nuclear C$^*$-algebra and
contains unital matrix algebras of any order, then the similarity
degree of ${\mathcal  A}\otimes {\mathcal  B}$ is at most 5. Here we state our
another result which generalize F. Pop's result.

To prove our result, we need the following lemma (Corollary 2.3 in
\cite{F. Pop}).

\begin{lemma}\label{lemma,florin pop} Let $\mathcal  A$ and $\mathcal  B$ be unital C$^*$-algebras and
$\mathcal  B$ nuclear. If $\pi$ is a unital bounded homomorphism of
${\mathcal  A}\otimes {\mathcal  B}$ such that $\pi|_{\mathcal  A}$ is completely
bounded and $\pi|_{\mathcal  B}$ is *-homomorphism, then $\pi$ is
completely bounded and $\|\pi\|_{cb}\leq \|\pi|_{\mathcal  A}\|_{cb}$.
\end{lemma}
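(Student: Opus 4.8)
The plan is to conjugate the whole representation into a $*$-homomorphism by a single positive invertible operator that is arranged to commute with $\pi|_{\mathcal B}$, and to pay for this conjugation exactly $\|\pi|_{\mathcal A}\|_{cb}$. Write $\phi=\pi|_{\mathcal A}$ and $\sigma=\pi|_{\mathcal B}$. Since $\mathcal B$ is nuclear the norm on $\mathcal A\otimes\mathcal B$ is unambiguous, and because $\pi$ is a homomorphism of $\mathcal A\otimes\mathcal B$ while $\mathcal A\otimes 1$ commutes with $1\otimes\mathcal B$, the ranges $\phi(\mathcal A)$ and $\sigma(\mathcal B)$ commute and $\pi(\sum_i a_i\otimes b_i)=\sum_i\phi(a_i)\sigma(b_i)$ on the algebraic tensor product. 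In particular $\phi(\mathcal A)\subseteq\sigma(\mathcal B)'$.

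First I would apply Paulsen's similarity theorem to the unital completely bounded homomorphism $\phi$: there is an invertible $S$ with $\|S\|\,\|S^{-1}\|=\|\phi\|_{cb}$ such that $a\mapsto S^{-1}\phi(a)S$ is a $*$-homomorphism. Setting $P=SS^{*}>0$, a direct computation converts the $*$-preserving property into the twisted commutation relation $P\phi(a)^{*}=\phi(a^{*})P$ for all $a\in\mathcal A$ (call it $(\ast)$), together with $\|P^{-1}\|^{-1}I\le P\le\|P\|I$.

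The heart of the argument, and the only place nuclearity enters, is to replace $P$ by a positive invertible operator lying in $\sigma(\mathcal B)'$ while preserving $(\ast)$. Since $\mathcal B$ is nuclear the von Neumann algebra $\mathcal M=\sigma(\mathcal B)''$ is injective, hence so is its commutant $\sigma(\mathcal B)'$, so there is a conditional expectation (norm-one projection) $\mathcal E\colon\mathcal B(\mathcal H)\to\sigma(\mathcal B)'$. As $\phi(a^{*})$ and $\phi(a)^{*}$ both lie in $\sigma(\mathcal B)'$, Tomiyama's bimodule property gives $\mathcal E(P)\phi(a)^{*}=\mathcal E(P\phi(a)^{*})=\mathcal E(\phi(a^{*})P)=\phi(a^{*})\mathcal E(P)$, so $\tilde P:=\mathcal E(P)$ still satisfies $(\ast)$; moreover $\tilde P\in\sigma(\mathcal B)'$ is positive with $\|\tilde P\|\le\|P\|$ and $\|\tilde P^{-1}\|\le\|P^{-1}\|$, using that $\mathcal E$ is unital and positive.

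Finally I would set $R=\tilde P^{1/2}\in\sigma(\mathcal B)'$ and form $\tilde\pi=R^{-1}\pi(\cdot)R$. Because $R$ commutes with $\sigma(\mathcal B)$ one has $\tilde\pi|_{\mathcal B}=\sigma$, and because $R^{2}=\tilde P$ obeys $(\ast)$ one checks $\tilde\pi|_{\mathcal A}(a)^{*}=R\phi(a)^{*}R^{-1}=R^{-1}\phi(a^{*})R=\tilde\pi|_{\mathcal A}(a^{*})$, so $\tilde\pi|_{\mathcal A}$ is a $*$-homomorphism as well. Both restrictions being $*$-homomorphisms with commuting ranges, $\tilde\pi$ is $*$-preserving on the dense algebraic tensor product and, being continuous, is a $*$-homomorphism; hence $\|\tilde\pi\|_{cb}\le 1$. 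Conjugating back, $\|\pi\|_{cb}=\|R\,\tilde\pi(\cdot)\,R^{-1}\|_{cb}\le\|R\|\,\|R^{-1}\|\le\|S\|\,\|S^{-1}\|=\|\phi\|_{cb}=\|\pi|_{\mathcal A}\|_{cb}$, which is the claim. The main obstacle is the middle step: without nuclearity there need be no projection onto $\sigma(\mathcal B)'$, and it is exactly the bimodule property of $\mathcal E$ that transports the Paulsen similarity into the commutant of $\sigma(\mathcal B)$ at no extra cost.
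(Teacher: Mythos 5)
Your argument is correct, but note that the paper itself offers no proof of this lemma at all: it is imported verbatim as Corollary~2.3 of F.~Pop's paper \cite{F. Pop}, so there is no internal proof to compare against. What you have written is a complete, self-contained reconstruction of the standard argument behind such results (going back to Christensen's and Haagerup's averaging technique, which is also what underlies Pop's paper): Paulsen's similarity theorem applied to $\phi=\pi|_{\mathcal A}$ produces $S$ with $\|S\|\,\|S^{-1}\|=\|\phi\|_{cb}$, the positive operator $P=SS^*$ satisfies the twisted relation $P\phi(a)^*=\phi(a^*)P$, and nuclearity of $\mathcal B$ enters exactly where you say it does --- $\sigma(\mathcal B)''$ is injective, hence so is $\sigma(\mathcal B)'$, and Tomiyama's bimodule property of the resulting conditional expectation $\mathcal E$ transports $P$ into $\sigma(\mathcal B)'$ while preserving the twisted relation and the bounds $\|\mathcal E(P)\|\le\|P\|$, $\|\mathcal E(P)^{-1}\|\le\|P^{-1}\|$. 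The final bookkeeping is also right: $\|R\|\,\|R^{-1}\|\le(\|P\|\,\|P^{-1}\|)^{1/2}=\|S\|\,\|S^{-1}\|$, and the conjugated map is a $*$-homomorphism on the algebraic tensor product (both restrictions $*$-preserving with commuting ranges), hence by continuity on all of $\mathcal A\otimes\mathcal B$ (unambiguous by nuclearity), giving $\|\pi\|_{cb}\le\|\pi|_{\mathcal A}\|_{cb}$. The background facts you invoke without proof --- quotients/representations of nuclear algebras generate injective von Neumann algebras, injectivity passes to commutants, norm-one projections are positive conditional expectations, and unital completely contractive homomorphisms of C$^*$-algebras are $*$-homomorphisms --- are all standard and correctly deployed, so the proof stands as a legitimate substitute for the citation.
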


Using Lemma \ref{lemma,florin pop} and the idea in the proof of
Theorem \ref{theorem, similarity degree}, we can get the following
theorem:

\begin{theorem} Let $\mathcal  A$ be a unital nuclear C$^*$-algebra such that for
any positive integer $N$, there is a finite-dimensional subalgebra
in $\mathcal  A$ containing the unit of $\mathcal  A$ with subrank
at least $N$. Then, for any unital C$^*$-algebra $\mathcal  B$,
$d({\mathcal  A}\otimes {\mathcal  B})\leq 5$.
\end{theorem}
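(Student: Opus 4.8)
The plan is to run the argument of Theorem \ref{theorem, similarity degree} on the tensor factor $\mathcal{A}$ and then convert the resulting complete boundedness of the other factor into complete boundedness of the whole map via Lemma \ref{lemma,florin pop}. By Remark \ref{remark, one-to-one} we may assume $\pi:\mathcal{A}\otimes\mathcal{B}\to\mathcal{B}(\mathcal{H})$ is a one-to-one unital bounded homomorphism; it suffices to find a constant $C$, independent of $\pi$, with $\|\pi\|_{cb}\leq C^{2}\|\pi\|^{5}$. Since $\mathcal{A}$ is nuclear we have $d(\mathcal{A})=2$, so the restriction $\pi|_{\mathcal{A}}$ (i.e.\ $\pi$ on $\mathcal{A}\otimes I_{\mathcal{B}}$) satisfies $\|\pi|_{\mathcal{A}}\|_{cb}\leq K\|\pi|_{\mathcal{A}}\|^{2}\leq K\|\pi\|^{2}$; by Haagerup's similarity theorem (\cite{Haagerup 1}) there is an invertible $S\in\mathcal{B}(\mathcal{H})$ with $S^{-1}\pi|_{\mathcal{A}}(\cdot)S$ a $^{*}$-homomorphism and $\|S\|\,\|S^{-1}\|\leq C\|\pi\|^{2}$. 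Set $\rho=S^{-1}\pi(\cdot)S$, so that $\rho$ is one-to-one, $\rho|_{\mathcal{A}}$ is an injective $^{*}$-homomorphism, and $\|\rho\|\leq\|S\|\,\|S^{-1}\|\,\|\pi\|\leq C\|\pi\|^{3}$.

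The main step is to show $\rho|_{\mathcal{B}}$ is completely bounded with $\|\rho|_{\mathcal{B}}\|_{cb}\leq\|\rho\|$. Fix $n$ and $\{b_{ij}\}_{1\leq i,j\leq n}\subseteq\mathcal{B}$. By hypothesis there is a finite-dimensional subalgebra $\mathcal{F}\cong\mathcal{M}_{k_{1}}(\mathbb{C})\oplus\cdots\oplus\mathcal{M}_{k_{r}}(\mathbb{C})$ of $\mathcal{A}$ with $I_{\mathcal{A}}\in\mathcal{F}$ and $\mbox{SubRank}(\mathcal{F})\geq n$, with matrix units $\{e_{ij}^{(s)}\}$. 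Since $\mathcal{F}\otimes I_{\mathcal{B}}$ and $I_{\mathcal{A}}\otimes\mathcal{B}$ are commuting C$^{*}$-subalgebras of $\mathcal{A}\otimes\mathcal{B}$, Lemma \ref{lemma, sd, estimation} gives $\|\sum_{s,i,j}e_{ij}^{(s)}\otimes b_{ij}\|=\|[b_{ij}]_{n\times n}\|$. Because $\rho|_{\mathcal{A}}$ is an injective $^{*}$-homomorphism, the elements $f_{ij}^{(s)}:=\rho(e_{ij}^{(s)}\otimes I_{\mathcal{B}})$ form a genuine system of matrix units for a copy of $\mathcal{F}$ in $\mathcal{B}(\mathcal{H})$ (still of subrank $\geq n$), and since $\rho$ is multiplicative they commute with every $\rho(I_{\mathcal{A}}\otimes b)$; taking adjoints shows $C^{*}(\rho(I_{\mathcal{A}}\otimes\mathcal{B}))$ commutes with $C^{*}(\{f_{ij}^{(s)}\})$. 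Applying Lemma \ref{lemma, sd, estimation} a second time, now inside $\mathcal{B}(\mathcal{H})$, yields
\[
\bigl\|[\rho(I_{\mathcal{A}}\otimes b_{ij})]_{n\times n}\bigr\|
=\Bigl\|\sum_{s,i,j}\rho(I_{\mathcal{A}}\otimes b_{ij})\,f_{ij}^{(s)}\Bigr\|
=\Bigl\|\rho\Bigl(\sum_{s,i,j}e_{ij}^{(s)}\otimes b_{ij}\Bigr)\Bigr\|
\leq\|\rho\|\,\bigl\|[b_{ij}]_{n\times n}\bigr\|.
\]
As $n$ and the $b_{ij}$ were arbitrary, $\|\rho|_{\mathcal{B}}\|_{cb}\leq\|\rho\|\leq C\|\pi\|^{3}$.

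Finally I would apply Lemma \ref{lemma,florin pop} with the nuclear factor taken to be $\mathcal{A}$: since $\rho|_{\mathcal{A}}$ is a $^{*}$-homomorphism and $\rho|_{\mathcal{B}}$ is completely bounded, $\rho$ is completely bounded with $\|\rho\|_{cb}\leq\|\rho|_{\mathcal{B}}\|_{cb}\leq C\|\pi\|^{3}$. Conjugating back, $\|\pi\|_{cb}=\|S\rho(\cdot)S^{-1}\|_{cb}\leq\|S\|\,\|S^{-1}\|\,\|\rho\|_{cb}\leq C\|\pi\|^{2}\cdot C\|\pi\|^{3}=C^{2}\|\pi\|^{5}$, whence $d(\mathcal{A}\otimes\mathcal{B})\leq5$. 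I expect the delicate point to be the second paragraph: the image $\rho(I_{\mathcal{A}}\otimes\mathcal{B})$ need not be self-adjoint, so one must argue (via adjoints) that it commutes with the matrix units $f_{ij}^{(s)}$ before Lemma \ref{lemma, sd, estimation} can legitimately be invoked in $\mathcal{B}(\mathcal{H})$, and one must use the injectivity supplied by Remark \ref{remark, one-to-one} to be sure that $\{f_{ij}^{(s)}\}$ really generates a copy of $\mathcal{F}$ of subrank at least $n$.
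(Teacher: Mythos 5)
Your proposal is correct and is exactly the argument the paper intends: the paper leaves this theorem as a one-line remark (``use Lemma \ref{lemma,florin pop} and the idea in the proof of Theorem \ref{theorem, similarity degree}''), and you have carried that out faithfully --- Haagerup's theorem applied via nuclearity of $\mathcal{A}$ to fix one similarity $S$, the double application of Lemma \ref{lemma, sd, estimation} (once in $\mathcal{A}\otimes\mathcal{B}$, once in $\mathcal{B}(\mathcal{H})$) to get $\|\rho|_{\mathcal{B}}\|_{cb}\leq\|\rho\|$, then Pop's lemma and conjugation back to reach the exponent $5$. You also correctly flag and resolve the one genuine subtlety (that $\rho(I_{\mathcal{A}}\otimes\mathcal{B})$ is not self-adjoint, so one must pass to its generated C$^*$-algebra inside the commutant of the self-adjoint system $\{f_{ij}^{(s)}\}$, and that injectivity from Remark \ref{remark, one-to-one} keeps the matrix units nondegenerate), a point the paper glosses over even in Theorem \ref{theorem, similarity degree}.
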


\noindent {\bf Acknowledgement} The authors would like to thank
Professor Don Hadwin for many helpful discussions.

\end{document}